\theoremstyle{plain}
\newtheorem{theorem}{Theorem}[section]
\newtheorem{lemma}[theorem]{Lemma}
\newtheorem{corollary}[theorem]{Corollary}
\newtheorem{proposition}[theorem]{Proposition}
\theoremstyle{definition}
\newtheorem{definition}[theorem]{Definition}
\newtheorem{remark}[theorem]{Remark}
\numberwithin{equation}{section}
\newcommand{\N} {\mathbb N}
\newcommand{\Z} {\mathbb Z}
\newcommand{\R} {\mathbb R}
\def \ind{\mathds{1}}
\newcommand{\ER} {Erd\H{o}s-R\'enyi }
\newcommand{\cadlag} {c\`adl\`ag\ }
\newcommand{\ltwoord}{\ell_{2}^\downarrow}
\def \ltwopos {\ell_2^{+} }
\newcommand{\linford}{\ell_\infty^\downarrow}
\newcommand{\lzeroord}{\ell_0^\downarrow}
\newcommand{\prob}[1]{\ensuremath{\mathbf{P}\left(\,#1\,\right)}}
\newcommand{\expect}[1]{\ensuremath{\mathbf{E}\left(\,#1\,\right)}}
\newcommand{\condprob}[2]{\ensuremath{\mathbf{P}\left(\,#1\,\big|\,#2\,\right)}}
\newcommand{\condexpect}[2]{\ensuremath{\mathbf{E}\left(\,#1\,\middle|\,#2\,\right)}}
\def \dist {\mathrm{d}}
\newcommand{\bm}{\mathbf{m}}
\newcommand{\um}{\underline{m}}
\title
{Feller property of the multiplicative coalescent with linear deletion}
\begin{document}
\author { Bal\'azs R\'ath\textsuperscript{1}}

\footnotetext[1]{MTA-BME Stochastics Research Group, Hungary. E-mail: rathb@math.bme.hu}

\maketitle

\begin{abstract}
We modify the definition of Aldous' multiplicative coalescent process \cite{aldous_excursions_coalescent} 
and 
 introduce the multiplicative coalescent with linear deletion (MCLD). A state of this
 process is a square-summable decreasing sequence of cluster sizes. Pairs of clusters
 merge with a rate equal to the product of their sizes and clusters are deleted with a rate linearly 
 proportional to their size. We prove that the MCLD is a Feller process.
 This result is a key ingredient in  the description of  scaling limits of the evolution of component sizes 
 of the mean field frozen percolation model \cite{br_frozen_2009} and the so-called rigid 
 representation of such scaling limits \cite{rigid_paper}.

\bigskip

\medskip

\noindent \textsc{Keywords:} multiplicative coalescent, Feller process\\
\textsc{AMS MSC 2010:} 60J99, 60B12, 05C80

\end{abstract}

\section{Introduction}

Let us define 
\begin{align*}
\linford &= \{ \;\um=(m_1, m_2, \dots) \; : \;
m_1\geq m_2\geq \dots \geq 0 \; \}, \\
\ltwoord &= \{ \; \um \in \linford \; : \; \sum_{i=1}^\infty m_i^2<\infty \},
 \\
\lzeroord &= \{ \; \um \in \linford \; : \; \exists \; i_0 \in \N \; : 
\; m_i=0 \; \text{ for any } \; i \geq i_0 \; \}.
\end{align*}

For $\um,\um' \in \ltwoord$ one defines the distance
\begin{equation}\label{eq_def_d_metric}
 \dist(\um ,\um')=\Vert \um-\um' \Vert_2= \left( \sum_{i \geq 1} (m_i-m'_i)^2  \right)^{1/2}.
 \end{equation}
The metric space $\left( \ltwoord, \, \dist(\cdot,\cdot) \right)$ is complete and separable. 

The multiplicative coalescent process (or briefly $\mathrm{MC}$ process), defined in 
\cite[Section 1.5]{aldous_excursions_coalescent}, is
a continuous-time Markov process $\bm_t, t \geq 0$ with state space $\ltwoord$. 
The state  $\bm_t$ represents the ordered sequence of sizes of components, 
where two components of size $m_i$ and $m_j$ merge with rate $m_i \cdot m_j$.
By \cite[Proposition 5]{aldous_excursions_coalescent}, the multiplicative coalescent process
has the Feller property with respect to the metric $\dist(\cdot,\cdot)$ on $\ltwoord$.
On the other hand, if $\bm_0 \in \linford \setminus \ltwoord$, then all of the components instantaneously
coagulate and form one component with infinite mass, see \cite[Section 2.1]{limic_phd_thesis}.
In Section \ref{subsection:mc}, we collect  the basic results about MC relevant for our study.

\medskip

Let $\lambda \in \R_+$. For any $\um \in \ltwoord$
we want to define a continuous time Markov process $\bm_t$ with state space $\ltwoord$ 
where $\bm_0=\um$ and $\bm_t$ represents the ordered sequence of sizes of components of a 
coagulation-deletion
process
at time $t$. We want the dynamics of the process $\bm_t, t \geq 0$ to satisfy 
\begin{equation}\label{mcld_informal_def}
\begin{array}{l}
 \text{(i) two components of size $m_i$ and $m_j$ merge with rate $m_i \cdot m_j$,} \\
 \text{(ii) a component of size $m_i$ is deleted with rate $\lambda \cdot m_i$.}
 \end{array}
\end{equation}

We are going to call such a process a \emph{multiplicative coalescent with linear deletion} with deletion rate $\lambda$, and briefly denote it by $\mathrm{MCLD}(\lambda)$.

\medskip

If $\um \in \lzeroord$  then the $\mathrm{MCLD}(\lambda)$ 
process obviously exists and $\bm_t \in \lzeroord $  for any $t \geq 0$. In fact, if
$\um \in \linford$ with $\sum_{i=1}^\infty m_i<\infty$ then the definition of $\mathrm{MCLD}(\lambda)$
is still quite simple because the time between consecutive coalescences/deletions is always positive.
On the other hand, for initial conditions with infinite total mass, the set of times when a coalescence or deletion occurs will be dense in $\R_+$, and it is not a priori clear that a well-defined stochastic process satisfying \eqref{mcld_informal_def}
exists (see Remark \ref{two_dim_frozen_percolation_non_existence} below for  related non-existence results).
 
In Section \ref{subsection:deletions} we will give a \emph{graphical construction} of the process $\bm_t$  with initial state
 $\um \in \ltwoord$ and deletion rate $\lambda$. 
 This construction of $\mathrm{MCLD}(\lambda)$ is similar to, but not as simple as the graphical construction of the $\mathrm{MC}$
 given in \cite[Section 1.5]{aldous_excursions_coalescent} because $\mathrm{MCLD}(\lambda)$ lacks the monotonicity properties
 of $\mathrm{MC}$, see Remark \ref{remark_lack_of_monotonicity} below.
 In Section \ref{subsection:deletions} we also prove the following proposition.
 
\begin{proposition}\label{lemma_mcld_graphical_rep_cadlag}
For any  $\um \in \ltwoord$  our graphical construction  of $\mathrm{MCLD}(\lambda)$ (see Section \ref{subsection:deletions})
almost surely gives a function $t \mapsto \bm_t$ with
$\bm_0=\um$ which 
is \cadlag with respect to the  $\dist(\cdot,\cdot)$-metric.
\end{proposition}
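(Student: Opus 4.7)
My plan is to approximate $\bm_t$ by $\mathrm{MCLD}(\lambda)$ processes started from truncated initial conditions and to deduce the càdlàg property from uniform convergence on compact time intervals.

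For each $N \in \N$, let $\um^{(N)} := (m_1, \dots, m_N, 0, 0, \dots) \in \lzeroord$ and denote by $\bm_t^{(N)}$ the $\mathrm{MCLD}(\lambda)$ process started from $\um^{(N)}$, built from the \emph{same} exponential clocks that drive the graphical construction of $\bm_t$ in Section \ref{subsection:deletions}. Because $\um^{(N)}$ has finite support and hence finite total mass, the merger and deletion rates in $\bm_t^{(N)}$ are bounded at each moment by a finite function of the current state, so the events form a locally finite point process in time. Therefore $t\mapsto \bm_t^{(N)}$ is piecewise constant between isolated jump times, and is in particular $\cadlag$ with respect to $\dist(\cdot,\cdot)$.

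The heart of the argument is to show that, almost surely,
\begin{equation*}
\lim_{N\to\infty} \sup_{t\in[0,T]} \dist\!\left(\bm_t,\bm_t^{(N)}\right) = 0 \qquad \text{for every } T>0.
\end{equation*}
From this and the fact that the space of $\ltwoord$-valued $\cadlag$ paths on $[0,T]$ is closed under uniform convergence in the sup-metric, the desired $\cadlag$ property of $t\mapsto\bm_t$ follows. To prove the uniform convergence, I would use the graphical representation to label every mass atom of $\bm_t$ by the index $i$ of the initial particle to which it belongs. Each cluster of $\bm_t$ then splits as a \emph{heavy} part (indices $\leq N$), which essentially matches a cluster of $\bm_t^{(N)}$, and a \emph{light} part (indices $>N$) whose total contribution to $\dist(\bm_t,\bm_t^{(N)})$ can be controlled, I expect, by the tail quantity $\sum_{i>N} m_i^2$, which tends to $0$ by square-summability.

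The main obstacle is the lack of monotonicity mentioned in Remark \ref{remark_lack_of_monotonicity}: unlike in $\mathrm{MC}$, one cannot argue that $\bm_t^{(N)}$ sits below $\bm_t$ in any coordinatewise sense, because a merger between a heavy and a light particle followed by a deletion can wipe out a large heavy cluster in $\bm_t$ while its counterpart in $\bm_t^{(N)}$ survives. To handle this I would introduce a moment bound of the form $\expect{\Vert \bm_t\Vert_2^2}\le C_T\Vert \um\Vert_2^2$, leveraging either the $\mathrm{MC}$ estimates of Section \ref{subsection:mc} or the fact that deletions tend to decrease the $\ell_2$-norm, and then use Markov's inequality to discard a negligible event on which heavy clusters become anomalously large. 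On the complementary event, the number of heavy-to-light mergers in $[0,T]$ is tight, which controls the $\ell_2$-defect of the coupling. Re-sorting the two sequences into $\ltwoord$ and using the standard inequality $\dist(\um^\downarrow, \um'^\downarrow) \leq \|\um-\um'\|_2$ then yields the required uniform bound.
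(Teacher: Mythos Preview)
Your approach has a genuine gap at the uniform-convergence step. You assert that $\sup_{t\in[0,T]}\dist(\bm_t,\bm_t^{(N)})\to 0$ almost surely, but the sketch you give does not establish this, and in fact even the weaker statement of fixed-time convergence in probability is the main theorem of the paper (Theorem~\ref{thm:feller}). The heavy/light split you describe is exactly what works for the plain multiplicative coalescent, where monotonicity guarantees that every cluster of $\bm_t^{(N)}$ sits inside a cluster of $\bm_t$; with deletions that containment fails, as you yourself note. Your proposed fix---a moment bound $\expect{\Vert\bm_t\Vert_2^2}\le C_T\Vert\um\Vert_2^2$ followed by Markov---does not repair this. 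First, such a bound is not available in general: the estimates of Section~\ref{subsection:mc} give $\expect{S_2^{G_t}}\le 2S_2^{G_0}$ only under the restrictive hypothesis $S_2^{G_0}\le 1/(2t)$ (Corollary~\ref{lemma_small_S_2_does_not_increase_much}). Second, and more fundamentally, bounding the size of heavy clusters does not bound $\dist(\bm_t,\bm_t^{(N)})$: a single heavy-to-light merger followed by a lightning can delete an arbitrarily large heavy cluster in $\bm_t$ while its counterpart in $\bm_t^{(N)}$ survives, producing an order-one discrepancy from an event whose probability you have not controlled. Quantifying how often such ``bad'' interactions occur is precisely the content of Section~\ref{subsection_truncation}, where a sandwich $\widehat G^{(m)}\subseteq H,H^{(m)}\subseteq\widecheck G^{(m)}$ is built and estimated via Lemmas~\ref{lemma_auxiliary_graph_inclusions}--\ref{lemma_minor_major_close}; even that machinery yields only convergence in probability at fixed times, not the a.s.\ uniform convergence you require.

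The paper sidesteps all of this by arguing directly and locally in time. For fixed $t$ one brackets both $H_t$ and $H_{t+\Delta t}$ between a common lower graph $\widehat H_{t+\Delta t}$ (edges of $G_t$, vertices of $\mathcal V_{t+\Delta t}$) and a common upper graph $\widecheck H_{t+\Delta t}$ (edges of $G_{t+\Delta t}$, vertices of $\mathcal V_t$), applies Lemma~\ref{lemma_dominate_graph_compare_aldous}, and reduces right-continuity to $S_2^{\widecheck H_{t+\Delta t}}-S_2^{\widehat H_{t+\Delta t}}\to 0$ as $\Delta t\to 0_+$. That follows from the c\`adl\`ag property of the plain MC (Lemma~\ref{lemma_mc_graphical_rep_cadlag}) together with the a.s.\ fact that each component of $G_T$ receives only finitely many lightnings on $[0,T]$. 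This is far lighter than trying to prove a uniform version of Theorem~\ref{thm:feller} as a subroutine.
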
 
 
 The main result of this paper is that our construction indeed gives rise to a well-behaved continuous-time Markov process on 
$\ltwoord$:

\begin{theorem}[Feller property]
\label{thm:feller_basic}
Let $\um^{(n)}, n \in \N$ be a convergent sequence of elements of  $\ltwoord$ with limit $\um^{(\infty)}$,
 i.e., $\lim_{n \to \infty} \dist(\um^{(n)},\um^{(\infty)})=0 $.
For any $t \in \R_+$ and $n \in \N_+ \cup \{\infty\}$, denote by $\bm^{(n)}_t$ the $\mathrm{MCLD}(\lambda)$ process with initial condition 
$\um^{(n)}$ at time $t$. For any $ t \geq 0$ we have
\begin{equation}\label{feller_convergence_in_distribution}
\bm^{(n)}_t  \stackrel{d}{\longrightarrow}   \bm^{(\infty)}_t , \quad n \to \infty,
\end{equation}
where $\stackrel{d}{\longrightarrow}$ denotes convergence in distribution of random variables on the 
Polish space $(\ltwoord, \dist(\cdot,\cdot))$.
\end{theorem}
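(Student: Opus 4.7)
The plan is to use the graphical construction of Section \ref{subsection:deletions} to couple the processes $\bm^{(n)}$ for $n \in \N_+ \cup \{\infty\}$ on a single probability space, and then to prove convergence in probability in the metric $\dist(\cdot,\cdot)$, which implies the convergence in distribution \eqref{feller_convergence_in_distribution}. The key device is an approximation by finitely supported initial data: for each $K \in \N$ and each $n \in \N_+ \cup \{\infty\}$, let $\um^{(n,K)} \in \lzeroord$ be obtained from $\um^{(n)}$ by zeroing out all entries of index $>K$, and let $\bm^{(n,K)}$ denote the $\mathrm{MCLD}(\lambda)$ process driven by the same graphical data as $\bm^{(n)}$, with initial condition $\um^{(n,K)}$. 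The triangle inequality gives
\begin{equation*}
\dist\bigl(\bm^{(n)}_t,\bm^{(\infty)}_t\bigr) \leq \dist\bigl(\bm^{(n)}_t,\bm^{(n,K)}_t\bigr) + \dist\bigl(\bm^{(n,K)}_t,\bm^{(\infty,K)}_t\bigr) + \dist\bigl(\bm^{(\infty,K)}_t,\bm^{(\infty)}_t\bigr),
\end{equation*}
and it suffices to control each of the three terms on the right-hand side.

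For fixed $K$, I expect the middle term to tend to $0$ almost surely as $n \to \infty$: both $\bm^{(n,K)}$ and $\bm^{(\infty,K)}$ start from sequences with at most $K$ positive entries, converging coordinatewise; on the compact interval $[0,t]$ only finitely many coalescences and deletions occur almost surely, and the jump rates are continuous functions of the cluster sizes. An induction over the (a.s.\ finite) sequence of jump times of $\bm^{(\infty,K)}$ yields pathwise continuity in the initial datum, essentially as in Aldous' argument for $\mathrm{MC}$ in the finite case. The remaining work reduces to proving an estimate of the form
\begin{equation*}
\expect{\dist\bigl(\bm^{(n)}_t,\bm^{(n,K)}_t\bigr)^2} \leq C(t,\lambda)\sum_{i>K}\bigl(m^{(n)}_i\bigr)^2, \qquad n \in \N_+ \cup \{\infty\},
\end{equation*}
with a constant $C(t,\lambda)$ independent of $n$ and $K$; then both outer terms in the triangle inequality tend to $0$ as $K\to\infty$, uniformly in $n$, thanks to the $\ltwoord$-convergence of the initial data.

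The main obstacle is precisely this uniform tail bound, since $\mathrm{MCLD}$ lacks the monotonicity used in Aldous' proof of the $\mathrm{MC}$ Feller property (see Remark \ref{remark_lack_of_monotonicity}): there is no pathwise domination of $\bm^{(n,K)}$ by $\bm^{(n)}$, and small initial clusters can aggregate into large ones through mergers. My intended approach is to label each atom of initial mass in the graphical construction according to whether it originates from $\um^{(n,K)}$ or from the tail $\um^{(n)}-\um^{(n,K)}$, and to track the total squared mass contained in clusters that involve at least one ``tail'' atom. Applying the $\mathrm{MCLD}$ generator to the functional $\um \mapsto \|\um\|_2^2$ gives
\begin{equation*}
\frac{d}{dt}\expect{\|\bm_t\|_2^2} = \expect{\Bigl(\sum_i m_{t,i}^2\Bigr)^2 - \sum_i (m_{t,i})^4 - \lambda \sum_i (m_{t,i})^3},
\end{equation*}
and a suitable analogue restricted to the ``tail-coloured'' sub-mass, combined with a Gronwall-type argument in which the deletion term $-\lambda\sum_i m_i^3$ plays a crucial stabilizing role, should yield the required bound. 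The bulk of the technical work will lie in setting up the coloured coupling precisely enough that the bookkeeping is rigorous and in controlling the higher moments that appear on the right-hand side of the generator identity.
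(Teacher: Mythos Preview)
Your overall scaffolding---the $(\xi,\lambda)$-coupling, truncation to $\lzeroord$, and the three-term triangle inequality---is exactly the paper's approach, and your handling of the middle (finite) term is fine. The gap is in the tail bound. The moment inequality you aim for,
\[
\expect{\dist\bigl(\bm^{(n)}_t,\bm^{(n,K)}_t\bigr)^2}\;\leq\;C(t,\lambda)\sum_{i>K}\bigl(m^{(n)}_i\bigr)^2,
\]
with $C$ depending only on $(t,\lambda)$, is almost certainly too strong: even for pure $\mathrm{MC}$ the expectation $\expect{S_2^{G_t}}$ can be infinite for $\um\in\ltwoord$ (Lemma~\ref{lemma:S_2_G_t_as_finite} gives a.s.\ finiteness, not integrability), so at minimum $C$ must depend on the random quantity $S_2^{G_t}$ or one must work conditionally. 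More seriously, the Gronwall route you sketch does not close: the generator identity you wrote for $\|\bm_t\|_2^2$ has $(\sum_i m_{t,i}^2)^2$ on the right, a quartic quantity, and your plan to ``control the higher moments'' is precisely the unresolved step. The assertion that the deletion term $-\lambda\sum_i m_i^3$ is ``stabilizing'' is misleading---it is cubic while the coalescence term is quartic, so it cannot dominate, and in any case deletion is what \emph{breaks} the monotonicity that makes Aldous' argument work (cf.\ Remark~\ref{remark_lack_of_monotonicity}); it does not restore it.

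The paper takes a different route for the tail term. Rather than a moment bound, it works conditionally on the component structure of the two subgraphs $G^{m\downarrow}$ and $G^{m\uparrow}$ (spanned by $\{1,\dots,m\}$ and $\{m+1,m+2,\dots\}$ respectively). The key construction (Definitions~\ref{def_bad_components_feller}--\ref{def_sandwich_m_graphs}) singles out a set $K^*$ of ``bad'' components of $G^{m\downarrow}$---those reachable via the bipartite connection structure from a component hit by lightning---and shows (Lemma~\ref{lemma_good_component_properties}) that on the ``good'' components the truncated and full processes agree exactly. This yields explicit sandwiching graphs $\widehat{G}^{(m)}\subseteq H,H^{(m)}\subseteq\widecheck{G}^{(m)}$, so that $\dist(\bm,\bm^{(m)})$ is controlled by $S_2^{\widecheck{G}^{(m)}}-S_2^{\widehat{G}^{(m)}}$ via Lemma~\ref{lemma_dominate_graph_compare_aldous}. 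This difference is then bounded (Lemma~\ref{lemma_minor_major_close}) by path-counting in the bipartite graph, conditionally on $(G^{m\downarrow},G^{m\uparrow})$; the bound is linear in $\beta=S_2^{G^{m\uparrow}}$ with a prefactor depending polynomially on $\alpha=S_2^{G^{m\downarrow}}$. One then chooses $m$ large so that $\beta$ is small with high probability (Corollary~\ref{lemma_small_S_2_does_not_increase_much}), and controls $\alpha$ on a high-probability event via Lemma~\ref{lemma:S_2_G_t_as_finite}. Everything is done in probability, never in expectation. Your ``coloured coupling'' intuition is pointing in the right direction, but the sandwiching/bad-component mechanism is the concrete replacement for the monotonicity you are missing.
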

We will prove Theorem \ref{thm:feller_basic} in Section \ref{section:feller_coupling_proof} using an argument
that involves truncation and coupling.

\begin{remark}\label{remark_lack_of_monotonicity}
 The reason why the proof of the Feller property for $\mathrm{MCLD}(\lambda)$ is more involved than the proof of the Feller property for $\mathrm{MC}$ (c.f.\ the proof of \cite[Proposition 5]{aldous_excursions_coalescent} in 
 \cite[Section 4.2]{aldous_excursions_coalescent})   is that the  natural graphical construction of $\mathrm{MCLD}(\lambda)$ is not \emph{monotone}:
 
  If we obtain $\bm'_t, t \geq 0$ from $\bm_t, t \geq 0$ by inserting an extra deletion event at time $t_1$ then it might happen that this deletion prevents later coagulations and deletions, so that $\bm'_{t_2}$ has more/bigger components than
 $\bm_{t_2}$ for some $t_2>t_1$. Similarly,  insertion of an extra coagulation event at some time might lead to the deletion of more/bigger components and thus create a state with fewer/smaller components at a later time.
\end{remark}

\subsection{Motivation, related results}

Our reason for developing the theory of $\mathrm{MCLD}(\lambda)$ on the state space $\ltwoord$
is that we want to understand the scaling limit of the time evolution of large connected component sizes in the
self-organized critical mean field frozen percolation model \cite{br_frozen_2009}, as we now explain.

\medskip

The frozen percolation process on the binary tree was defined in \cite{Aldousfrozen}: the model is a modification of the dynamical percolation process on the binary tree which makes the following informal description precise: edges appear with rate $1$ and if an infinite component appears, we immediately ``freeze" it, and we do not allow edges with an end-vertex in a frozen component to appear.
  
\begin{remark}\label{two_dim_frozen_percolation_non_existence}
I. Benjamini and O. Schramm  showed that it is impossible to
define a similar modification of the percolation process on
$\Z^2$, c.f.\ \cite[Section 3, Remark (i)]{berg_toth}.
Various modifications of the two-dimensional frozen percolation model where large finite clusters are frozen
are further explored in \cite{berg_nolin, kiss_forzen, berg_nolin_exceptional_scales, berg_kiss_nolin_prevalence}.
 The result of \cite{kiss_manulescu_sid} about the closely related
model of two-dimensional
self-destructive percolation implies non-existence of the so-called two dimensional forest fire process, 
c.f.\  \cite[Section 3.2]{kiss_manulescu_sid}. However, the result of \cite{seven_dim_forest_fires} about
self-destructive percolation on the high-dimensional lattice $\Z^d$ indicates that the self-organized critical
 forest fire process
  on $\Z^d$ should exist if $d$ is high enough. The existence and uniqueness of the subcritical forest fire process on $\Z^d$
  was proved in \cite{durre_1, durre_2}.
 \end{remark}

Let us now recall the notion of \emph{mean-field frozen percolation process} from \cite{br_frozen_2009} (using slightly different notation). 
 
 \begin{definition}[$\mathrm{FP}(n,\lambda(n))$]
\label{def_frozen_percolation_model}
We start with a graph $F^{(n)}_0$ on $n$ vertices. Between each pair of unconnected vertices an edge appears with rate $1/n$; also,
every connected component of size $k$ is deleted with rate $\lambda(n)\cdot k$. (When a component is deleted, its vertices as well as its 
edges are removed from the graph.) 
Let $F^{(n)}_t$ be the graph at time $t$.  
Denote by 
\[ 
\mathbf{M}^{(n)}(t)= \left(M_1^{(n)}(t), M_2^{(n)}(t), \dots \right) \in \lzeroord 
\] 
the sequence of component sizes of $F^{(n)}_t$, arranged in decreasing order. 
\end{definition} 
    
Then $\mathbf{M}^{(n)}(t), t \geq 0$ is a Markov process -- let us call it here the
frozen percolation component process on $n$ vertices with lightning rate $\lambda(n)$,
or briefly $\mathrm{FP}(n,\lambda(n))$.  In fact,  up to time-change, $\mathbf{M}^{(n)}(t), t \geq 0$ evolves according to the rules \eqref{mcld_informal_def} of $\mathrm{MCLD}$.

\begin{remark}\label{remark_forest}
 We note that $\mathrm{FP}(n,\lambda(n))$  is a simplification of the mean field forest fire model \cite{br_bt_forest}, the definition of which agrees with Definition \ref{def_frozen_percolation_model} above, with the only difference that in the forest fire model we only delete the edges of the connected components that are destroyed by fire, i.e., a destroyed component of size $k$ is immediately replaced by $k$ singletons. The mean field forest fire model behaves very similarly to the mean field frozen percolation model
(e.g., the self-organized critical behaviour of the two models are quite similar, see also Remark \ref{remark_future_window_process}\eqref{remark_future_forest} below), however the mathematics of the
  mean field frozen percolation model is simpler than that of the mean field forest fire model, e.g.\
  the solution of the system of differential equations that appears in 
  \cite[Theorem 1.2]{br_frozen_2009} is fairly explicit, while the system of differential equations
  that appears in \cite[Theorem 2]{br_bt_forest} currently does not have an explicit solution.
 \end{remark} 
 
\begin{remark} One studies the asymptotic behaviour of the component size structure of $\mathrm{FP}(n,\lambda(n))$ when $1 \ll n$ and
 $1/n \ll \lambda(n) \ll 1$.
Definition \ref{def_frozen_percolation_model} above is slightly different from the one 
proposed in \cite[Section 5.5]{Aldousfrozen} and studied in \cite{merle_normand} where connected components are frozen when their size exceeds a threshold  $\omega(n)$ satisfying
 $1 \ll \omega(n) \ll n$. The results \cite[Theorem 1.1]{merle_normand} and 
 \cite[Theorem 1.2]{br_frozen_2009} are very similar: indeed, if one is interested in small connected component densities then the
 two models produce exactly the same (self-organized critical) behaviour. However, if one is interested in the scaling limit of big component dynamics, 
 the exact deletion mechanism does crucially enter the picture. 
 \end{remark}

   We are interested in identifying the scaling limit of $\mathrm{FP}(n,\lambda(n))$
  as $n \to \infty$.
In order to describe the kind of result we are after, let us recall that 
 the large components of the dynamical \ER random graph process 
 in the critical window $\mathcal{G}(n,\frac{1+t n^{-1/3}}{n}), t \in \R$, scaled by $n^{2/3}$,
converge in law to the \emph{standard multiplicative coalescent} process $(\mathcal{M}(t), t \in \R)$, see 
\cite[Section 4.3]{aldous_excursions_coalescent}.

\begin{remark}
The family of multiplicative coalescent processes defined for all $t \in \R$
 (i.e., the \emph{eternal} $\mathrm{MC}$ processes) are characterized in \cite{aldous_limic}.
  The class of  inhomogeneous random graph models 
 whose scaling limit is the standard $\mathrm{MC}$ is  explored in  \cite{bh_ho_leeu, basin} (see also references therein).
The scaling limits of other classes of inhomogeneous random graph models are related to non-standard eternal $\mathrm{MC}$ processes, see  \cite{aldous_limic,bh_novel}. 
The continuum scaling limit of the metric structure of critical random graphs is studied in
\cite{goldschmidt_et_al, bh_h_ss_metric} (see also references therein).
\end{remark}

The next result gives a scaling limit for 
the frozen percolation process started from a critical \ER graph.

\begin{proposition}\label{prop:FPlimit}
Fix $u \in \R$ and let $F^{(n)}_0$ be an \ER graph $\mathcal{G}(n,p)$ with edge probability $p=\frac{1+u n^{-1/3}}{n}$.
Let $\lambda>0$ and let
$\mathbf{M}^{(n)}(t), t \geq 0$ be the 
$\mathrm{FP}(n,\lambda n^{-1/3})$ process 
with initial state $F^{(n)}_0$.
Define $\mathbf{m}^{(n)}(t), t \geq 0$ by 
\begin{equation}\label{frozen_mcld_scaling}
 \mathbf{m}^{(n)}(t):=
\left( n^{-2/3} M_1^{(n)}(n^{-1/3}t), n^{-2/3}M_2^{(n)}(n^{-1/3} t), \dots \right). 
\end{equation}

Then as $n\to\infty$ the finite dimensional marginals of
the sequence of $\ltwoord$-valued processes $\mathbf{m}^{(n)}(t), t \geq 0$ converge in law to the
finite dimensional marginals of the
$\mathrm{MCLD}(\lambda)$ process $(\mathbf{m}(t), t \geq 0)$ started from an initial state with distribution $\mathbf{m}(0) \sim \mathcal{M}(u)$ (i.e., the state of the standard multiplicative coalescent process at time $u$), i.e., for every $k \in \N$ and $0 \leq t_1< t_2<\dots< t_k$ we have
\begin{equation*}
 \Big( \mathbf{m}^{(n)}(t_1), \mathbf{m}^{(n)}(t_2), \dots, \mathbf{m}^{(n)}(t_k) \Big)
 \stackrel{\mathrm{d}}{\longrightarrow} 
\Big( \mathbf{m}(t_1), \mathbf{m}(t_2), \dots, \mathbf{m}(t_k) \Big), \quad n \to \infty.
\end{equation*}
\end{proposition}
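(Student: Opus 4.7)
The plan is to reduce the statement to two ingredients: (i) Aldous's scaling limit for the critical \ER graph, which handles the initial condition, and (ii) the Feller property (Theorem~\ref{thm:feller_basic}), which propagates the convergence to any later time.

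First I would check that, after the rescaling \eqref{frozen_mcld_scaling}, the process $\mathbf{m}^{(n)}(t), t\geq 0$ is itself an $\mathrm{MCLD}(\lambda)$ process (with a finite, hence $\lzeroord\subset\ltwoord$, initial state). The rate computation is immediate: two $F^{(n)}$-components of rescaled sizes $m_i=n^{-2/3}M_i$ and $m_j=n^{-2/3}M_j$ merge at rate $M_iM_j/n = n^{1/3}m_im_j$ in the original time, and a component of rescaled size $m_i$ is deleted at rate $\lambda n^{-1/3}M_i = \lambda n^{1/3} m_i$; speeding time up by $n^{1/3}$, as prescribed in \eqref{frozen_mcld_scaling}, gives the merge rate $m_im_j$ and the deletion rate $\lambda m_i$. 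So $\mathbf{m}^{(n)}$ is exactly $\mathrm{MCLD}(\lambda)$ started from $\mathbf{m}^{(n)}(0)$. Then I invoke Aldous's scaling limit for the critical \ER graph (see \cite[Corollary~2 and proof of Proposition~4]{aldous_excursions_coalescent}), which gives
\[
\mathbf{m}^{(n)}(0)\stackrel{d}{\longrightarrow}\mathcal{M}(u)\quad\text{in }(\ltwoord,\dist(\cdot,\cdot)).
\]

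For the one-dimensional marginals, fix $t\ge 0$ and a bounded continuous $f:\ltwoord\to\R$, and define the semigroup $\Phi_t f(x):=\E[f(\bm_t^{x})]$, where $\bm_t^{x}$ denotes the $\mathrm{MCLD}(\lambda)$ process started from the deterministic state $x\in\ltwoord$. By Theorem~\ref{thm:feller_basic}, $\Phi_t f$ is a bounded continuous function on $\ltwoord$. Since $\mathbf{m}^{(n)}$ is itself an $\mathrm{MCLD}(\lambda)$, conditioning on $\mathbf{m}^{(n)}(0)$ and using the Markov property yields
\[
\E\bigl[f(\mathbf{m}^{(n)}(t))\bigr]=\E\bigl[\Phi_t f(\mathbf{m}^{(n)}(0))\bigr]\xrightarrow[n\to\infty]{}\E\bigl[\Phi_t f(\mathcal{M}(u))\bigr]=\E\bigl[f(\mathbf{m}(t))\bigr],
\]
by the Portmanteau theorem applied to the convergence in distribution of $\mathbf{m}^{(n)}(0)$ to $\mathcal{M}(u)$. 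Since $f$ was arbitrary, $\mathbf{m}^{(n)}(t)\stackrel{d}{\to}\mathbf{m}(t)$.

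For the joint finite-dimensional marginals at times $0\le t_1<\dots<t_k$ and bounded continuous $f_1,\dots,f_k$, I would iterate the same trick. Set $s_j:=t_j-t_{j-1}$ with $t_0:=0$, and define $h_k:=f_k$ and, recursively for $j=k-1,\dots,1$,
\[
h_j(x):=f_j(x)\,\Phi_{s_{j+1}}h_{j+1}(x).
\]
By Theorem~\ref{thm:feller_basic} each $\Phi_{s_{j+1}}h_{j+1}$ is bounded and continuous on $\ltwoord$, hence so is each $h_j$. The Markov property applied to the $\mathrm{MCLD}(\lambda)$ process $\mathbf{m}^{(n)}$ then gives
\[
\E\Bigl[\prod_{j=1}^{k}f_j(\mathbf{m}^{(n)}(t_j))\Bigr]=\E\bigl[\Phi_{s_1}h_1(\mathbf{m}^{(n)}(0))\bigr],
\]
and the analogous identity holds for $\mathbf{m}$ started from $\mathcal{M}(u)$. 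Passing to the limit via $\mathbf{m}^{(n)}(0)\stackrel{d}{\to}\mathcal{M}(u)$ and the continuity of $\Phi_{s_1}h_1$ concludes the proof.

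The only non-routine ingredient is the Feller property, which is precisely Theorem~\ref{thm:feller_basic}; once that is in hand, the main (minor) obstacle is the bookkeeping needed to verify that the rescaled finite-$n$ dynamics genuinely coincides with $\mathrm{MCLD}(\lambda)$ on $\lzeroord$ initial data, so that no separate convergence-of-dynamics argument is required beyond the Feller-plus-initial-condition argument sketched above.
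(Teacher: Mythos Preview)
Your proof is correct and follows exactly the approach the paper indicates: the paper itself does not give a detailed argument but simply states that Proposition~\ref{prop:FPlimit} ``follows as an application of Theorem~\ref{thm:feller_basic}'' and refers to \cite[Proposition~6.10]{rigid_paper} for details. Your write-up fills in precisely those details---the rate computation showing that $\mathbf{m}^{(n)}$ is an $\mathrm{MCLD}(\lambda)$ process on $\lzeroord$, Aldous's theorem for the initial condition, and the standard Feller-semigroup iteration for finite-dimensional marginals---so there is nothing to add.
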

The proof of Proposition \ref{prop:FPlimit} follows as an application of Theorem \ref{thm:feller_basic}
(for details of the proof, we refer to \cite[Proposition 6.10]{rigid_paper}).

\begin{remark}\label{remark_future_window_process}
$ $

\begin{enumerate}[(i)]
\item Loosely speaking, if $(\mathbf{m}(t), t \geq 0)$ is the
$\mathrm{MCLD}(\lambda)$ process started from an initial state with distribution $\mathbf{m}(0) \sim \mathcal{M}(u)$ (this is the limit object that appears in Proposition \ref{prop:FPlimit}), then we have $\mathbf{m}(t) \sim \mathcal{M}(u+t-\Phi(t))$, where $\Phi(t)$ denotes the sum of the sizes of the components deleted up to time $t$ (see \cite[Proposition 6.7(ii)]{rigid_paper} for a precise formulation of this property).
In fact, in  \cite[Proposition 6.7]{rigid_paper} we give a 
representation of $(\mathbf{m}(t), t \geq 0)$ on the probability space of a standard Brownian motion
using what we call the ``rigid'' representation of $\mathrm{MCLD}(\lambda)$. We  note that Theorem \ref{thm:feller_basic}
is also  crucially used when we extend our rigid representation results from $\lzeroord$ to $\ltwoord$ in 
\cite[Section 5]{rigid_paper}.

\item In \cite{james_balazs_window_process} we describe the possible scaling limits
that can arise from a $\mathrm{FP}(n,\lambda n^{-1/3})$ process started from an empty graph.
 The possible limit objects are eternal $\mathrm{MCLD}(\lambda)$ processes (i.e., they are defined for any $t \in \R$).
The ``arrival at the critical window'' gives rise to a non-stationary $\mathrm{MCLD}(\lambda)$ scaling limit,
while the scaling limit in the ``self-organized critical'' regime is a stationary $\mathrm{MCLD}(\lambda)$
(see also \cite[Remark 6.8]{rigid_paper}).

\item \label{remark_future_forest} We conjecture that the scaling limit of the coagulation-fragmentation dynamics of big components of the mean field forest fire model (c.f.\ Remark \ref{remark_forest} above) with lightning rate $\lambda n^{-1/3}$ is also an 
 $\mathrm{MCLD}(\lambda)$ process.
\end{enumerate}
\end{remark}

\section{Notation and basic results}
\label{subsection:mc}

The aim of this section is to collect some basic results about the multiplicative coalescent from 
\cite{aldous_excursions_coalescent} and \cite{limic_phd_thesis}. In some cases, we will augment these results to fit our purposes
or present them using different notation.

\medskip

We define 
\begin{equation*} 
\ltwopos = \left\{ x = (x_1,x_2,\dots) \, : \, \forall  i \; \; x_i \geq 0,  \quad
 \sum_{i \geq 1} x_i^2 < + \infty \right\}.
\end{equation*}
We have $\ltwoord \subseteq \ltwopos$. 
Define the mapping 
\begin{equation} \label{eq:def_ord}
\mathrm{ord}: \ltwopos \to \ltwoord
\end{equation} 
by letting $\mathrm{ord}(\underline{x})$
 be the decreasing rearrangement of $\underline{x} \in \ltwopos$.

\begin{definition}\label{def_weights_from_graph}
If $\um \in \ltwoord$ and $G$ is a graph with vertex set $V \subseteq \N_+$, denote by
${\mathrm{ord}}(\um,G)$ the ordered sequence of the weights of the connected components of $G$.
More precisely, if $\mathcal{C}_1, \mathcal{C}_2, \dots$ is the sequence of the vertex sets
of the connected components of $G$, we define
\begin{equation}\label{eq_def_ord_um_G}
 \underline{x}_G = \left( \sum_{i \in \mathcal{C}_1} m_i, \sum_{i \in \mathcal{C}_2} m_i, \dots \right) \quad 
\text{ and}
\quad \mathrm{ord}(\um,G)\stackrel{\eqref{eq:def_ord}}{=} {\mathrm{ord}}(\underline{x}_G),
 \end{equation}
assuming that $\underline{x}_G \in \ltwopos$. 
We also denote
\begin{equation}\label{S_2_G}
 S_2^{G} = \sum_{k=1}^{\infty} \left(\sum_{j \in \mathcal{C}_k} m_j \right)^2=
 \Vert x_G \Vert_2^2
 =\Vert {\mathrm{ord}}(\underline{x}_G)\Vert_2^2. 
 \end{equation}
\end{definition}

Let us now state an elementary yet useful result which involves the metric  $\dist(\cdot,\cdot)$
defined in \eqref{eq_def_d_metric}.

\begin{lemma}\label{lemma_dominate_graph_compare_aldous}
If $\um \in \ltwoord$ and $G,G'$ are graphs with vertex sets $V,V' \subseteq \N_+$ such that
$V \subseteq V'$, $G \subseteq G'$ and $\mathrm{ord}(\um,G) \in \ltwoord$  then we have
\begin{equation*}
\dist\left({\mathrm{ord}}(\um,G), {\mathrm{ord}}(\um,G') \right) 
\leq \sqrt{ \Vert{\mathrm{ord}}(\um,G')\Vert_2^2 - \Vert{\mathrm{ord}}(\um,G)\Vert_2^2  }.
\end{equation*}
\end{lemma}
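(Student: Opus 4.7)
My plan is to reduce the statement to an elementary componentwise inequality by means of an appropriate matching of the components of $G$ with those of $G'$.

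The starting point is the standard rearrangement fact: for any non-negative sequences $a,b\in\ltwopos$ and any bijection $\sigma$ of $\N$,
\[
\Vert \mathrm{ord}(a)-\mathrm{ord}(b)\Vert_2 \;\le\; \Vert a - b\circ\sigma\Vert_2,
\]
which follows by expanding the square and observing that $\sum_i a_i\, b_{\sigma(i)}$ is maximised when $a$ and $b\circ\sigma$ are similarly ordered. So any convenient matching between the (zero-padded) component-weight sequences of $G$ and $G'$ gives an upper bound on the $\dist$-distance in the lemma, and my only task is to construct a matching for which the resulting $\ell_2$-cost is controlled by $\Vert\mathrm{ord}(\um,G')\Vert_2^2-\Vert\mathrm{ord}(\um,G)\Vert_2^2$.

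Next, I exploit the inclusions $V\subseteq V'$ and $G\subseteq G'$: every component $\mathcal{C}_k$ of $G$ is contained in a unique component $\mathcal{C}'_{\phi(k)}$ of $G'$. Writing $S_j:=\phi^{-1}(j)$ and
\[
a_k := \sum_{i\in\mathcal{C}_k} m_i, \qquad b_j := \sum_{i\in\mathcal{C}'_j} m_i,
\]
we have $b_j = \sum_{k\in S_j} a_k + z_j$, where $z_j\ge 0$ is the total $\um$-mass of the vertices in $(V'\setminus V)\cap \mathcal{C}'_j$. Whenever $S_j\ne\emptyset$, pick $k^*_j\in S_j$ with $a_{k^*_j}=\max_{k\in S_j} a_k$. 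I would then define the matching as follows: pair $\mathcal{C}'_j$ with $\mathcal{C}_{k^*_j}$ whenever $S_j\ne\emptyset$; pair each remaining $\mathcal{C}_k$ (with $k\in S_j\setminus\{k^*_j\}$) with a virtual $0$-entry on the $G'$-side; and pair each $\mathcal{C}'_j$ with $S_j=\emptyset$ with a virtual $0$-entry on the $G$-side. This yields a genuine bijection between the zero-padded index sets of the two weight sequences.

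The argument then collapses to a componentwise check. Expanding the squared-difference contribution of each $\mathcal{C}'_j$ (namely $(b_j-a_{k^*_j})^2 + \sum_{k\in S_j,\,k\ne k^*_j} a_k^2$ if $S_j\ne\emptyset$ and $b_j^2$ if $S_j=\emptyset$) and comparing with the corresponding contribution $b_j^2-\sum_{k\in S_j} a_k^2$ to $\Vert\mathrm{ord}(\um,G')\Vert_2^2-\Vert\mathrm{ord}(\um,G)\Vert_2^2$, everything reduces, after cancellation, to the pointwise estimate
\[
\sum_{k\in S_j} a_k^2 \;\le\; a_{k^*_j}\,b_j \qquad\text{for each } j \text{ with } S_j\ne\emptyset,
\]
which is immediate since $a_{k^*_j}\ge a_k$ for all $k\in S_j$ and $\sum_{k\in S_j} a_k\le b_j$. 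I do not anticipate any real obstacle; the only mild subtlety is the mass carried by vertices in $V'\setminus V$, which is handled cleanly by matching ``parentless'' components $\mathcal{C}'_j$ against virtual zeros on the $G$-side.
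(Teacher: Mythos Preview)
Your argument is correct: the rearrangement inequality reduces the task to bounding the $\ell_2$-cost of \emph{some} matching, and your ``match each $\mathcal{C}'_j$ to its heaviest child'' construction together with the pointwise estimate $\sum_{k\in S_j} a_k^2 \le a_{k^*_j}\sum_{k\in S_j} a_k \le a_{k^*_j}\,b_j$ gives exactly the bound claimed. The only case you do not mention explicitly is $\Vert\mathrm{ord}(\um,G')\Vert_2=\infty$, where the inequality is vacuous.

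As for comparison with the paper: the paper does not prove this lemma at all but simply cites \cite[Lemma~17]{aldous_excursions_coalescent}. What you have written is essentially the standard proof of that cited lemma (Aldous states it for pure merging; your treatment of the extra mass $z_j$ from vertices in $V'\setminus V$ is the minor extension needed here). So your approach is not different in spirit --- it is the argument the citation points to, spelled out in full and adapted to the present setting.
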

\begin{proof}
This is a special case of \cite[Lemma 17]{aldous_excursions_coalescent}.

\end{proof}

Let us recall the graphical construction used in \cite[Section 1.5]{aldous_excursions_coalescent} to define
the multiplicative coalescent process.

\begin{definition}\label{def_mc_graphical}
 Let $\left( \xi_{i,j} \right)_{1\leq i<j <\infty} $ denote independent random variables with 
$\mathrm{EXP}(1)$ distribution. Given $\underline{x} \in \ltwopos$
let us define the simple graph $G_t$ with vertex set $\N_+$ and an edge between $i$ and $j$ if and only if $\xi_{i,j} \leq t x_i x_j$. 
For $i,j \in \N_+$ we denote by $i \stackrel{G_t}{\longleftrightarrow} j$ the event that $i$ and $j$ are connected 
by a simple path
in the graph $G_t$.
\end{definition}
Given $G_t$ we define the connected components
$\left( \mathcal{C}_k(t) \right)_{k=1}^{\infty}$ of $G_t$ by
\begin{equation}\label{eq_def_componnets_of_G_t}
 i_k= \min\{\, \N_+ \setminus \cup_{l=1}^{k-1} \mathcal{C}_l(t)\, \}, \quad 
  \mathcal{C}_k(t)=\{\, i \in \N_+ \; : \; 
i \stackrel{G_t}{\longleftrightarrow} i_k   \, \}, \quad k \geq 1. 
\end{equation}

Note that we have
\begin{equation}\label{S2_def}
S_2^{G_t} \stackrel{ \eqref{S_2_G} }{=} 
S_2^{G_0} + \sum_{i \neq j} x_i x_j  
\ind[ i \stackrel{G_t}{\longleftrightarrow} j ] 
\end{equation}
and $S_2^{G_0}= \sum_{i=1}^{\infty} x_i^2 < +\infty$ if $\underline{x} \in \ltwopos$.

The statement of the next lemma follows from \cite[Proposition 5]{aldous_excursions_coalescent} and
shows that 
 Definitions \ref{def_weights_from_graph} and \ref{def_mc_graphical} give rise to
  a graphical representation of the $\ltwoord$-valued
 multiplicative coalescent process  with initial state
 $\um \in \ltwoord$ in the form $\mathrm{ord}(\um,G_t), t \geq 0$.

\begin{lemma}\label{lemma:S_2_G_t_as_finite}
 For any $t \geq 0$ and $\underline{x} \in \ltwopos$ we have
\begin{equation}\label{S_2_G_t_as_finite}
\prob{S_2^{G_t}<+\infty}=1.
\end{equation}
In particular, for any $t \in \R_+$ the weights of the connected components of $G_t$ are almost surely finite:
\begin{equation}\label{finite_components}
\prob{ \forall \, k \in \N_+ \; : \; \sum_{i \in \mathcal{C}_k(t)} x_i < +\infty  }=1.
\end{equation}
\end{lemma}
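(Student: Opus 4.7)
The plan is to reduce both claims to a single bound on $\expect{S_2^{G_t}}$. The second statement \eqref{finite_components} is an immediate corollary of the first: since $S_2^{G_t}=\sum_k\bigl(\sum_{j\in\mathcal{C}_k(t)}x_j\bigr)^2$ is a sum of nonnegative terms, almost sure finiteness of the whole sum forces each $\sum_{j\in\mathcal{C}_k(t)}x_j$ to be finite almost surely. So only \eqref{S_2_G_t_as_finite} requires work.

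My approach is to prove \eqref{S_2_G_t_as_finite} for all sufficiently small $t$ via a direct path-counting estimate, and then bootstrap to arbitrary $t\geq 0$ using the Markov property of the graphical construction. Setting $\sigma:=\sum_i x_i^2<\infty$ and taking expectations in \eqref{S2_def},
\begin{equation*}
\expect{S_2^{G_t}} = \sigma + \sum_{i\neq j} x_i x_j\cdot \prob{i \stackrel{G_t}{\longleftrightarrow} j}.
\end{equation*}
For $i\neq j$ I would union-bound the connection probability over simple paths $i=v_0,v_1,\dots,v_r=j$: using $1-e^{-t x_k x_l}\leq t x_k x_l$ for the probability that any given edge belongs to $G_t$, and summing the resulting bound $t^r x_i x_j\prod_{s=1}^{r-1}x_{v_s}^2$ over length-$r$ sequences of intermediate vertices yields
\begin{equation*}
\prob{i \stackrel{G_t}{\longleftrightarrow} j} \leq \sum_{r\geq 1} t^r x_i x_j\, \sigma^{r-1} = \frac{t\, x_i x_j}{1-t\sigma},\qquad t\sigma<1.
\end{equation*}
Plugging this back in and using $\sum_{i\neq j}x_i^2 x_j^2\leq \sigma^2$ gives $\expect{S_2^{G_t}}\leq \sigma+\tfrac{t\sigma^2}{1-t\sigma}<\infty$ for $t<1/\sigma$, hence $S_2^{G_t}<\infty$ almost surely on this time window.

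To extend the conclusion to arbitrary $t\geq 0$, I would iterate the estimate using the Markov property of the graphical construction. By the memorylessness of the exponential clocks $\xi_{i,j}$, conditionally on $G_s$ the evolution $(G_{s+r})_{r\geq 0}$ can be coupled with a fresh copy of Definition \ref{def_mc_graphical} applied to the cluster-weight sequence $\mathrm{ord}(\underline{x},G_s)\in\ltwoord$, allowing the small-time argument to restart. A finite chain of such restarts reaches any prescribed $t$. The main obstacle is controlling the iteration: the new squared $\ell_2$ norm after each restart may exceed the previous $\sigma$, so the admissible step $1/\sigma_{\mathrm{new}}$ shrinks and one must rule out a Zeno-type accumulation of the step-sizes before reaching time $t$. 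This is precisely what \cite[Proposition 5]{aldous_excursions_coalescent} guarantees: MC started from an element of $\ltwoord$ is a well-defined $\ltwoord$-valued process for all time. The lemma can therefore be obtained either by invoking that proposition directly (as the excerpt indicates), or by carrying out the restart argument and closing the iteration with a continuity estimate such as Lemma \ref{lemma_dominate_graph_compare_aldous}.
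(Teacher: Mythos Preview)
Your proposal is correct and ends at the same place as the paper: the paper gives no independent argument and simply states that the lemma follows from \cite[Proposition~5]{aldous_excursions_coalescent}, which is precisely the reference you invoke after correctly identifying the Zeno obstruction in the restart scheme. The short-time path-counting bound you derive appears in the paper immediately afterward as \eqref{eq_i_j_conn_in_G_t} and Corollary~\ref{lemma_small_S_2_does_not_increase_much}.
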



\medskip

The next lemma is an extended version of \cite[(2.2)]{limic_phd_thesis}.
\begin{lemma}
For any $\underline{x} \in \ltwopos$ and $i,j \in \N_+$ and $t  < \frac{1}{ S_2^{G_0}} $ we have
\begin{equation}\label{eq_i_j_conn_in_G_t}
\prob{ i \stackrel{G_t}{\longleftrightarrow} j } \leq 
  \frac{ x_i \cdot x_j \cdot t }{1-t \cdot S_2^{G_0}} .
\end{equation}
\end{lemma}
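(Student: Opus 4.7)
The plan is to bound the event $\{i \stackrel{G_t}{\longleftrightarrow} j\}$ by a union bound over all simple paths in $G_t$ joining $i$ and $j$, then estimate the probability that each such path is open, and finally sum a geometric series. The key inequality driving the result is $1-e^{-u} \leq u$ for $u \geq 0$ applied to the edge probability $\prob{\xi_{i',j'} \leq t x_{i'} x_{j'}} = 1 - e^{-t x_{i'} x_{j'}}$.

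Concretely, for a simple path $\pi = (k_0, k_1, \ldots, k_\ell)$ with $k_0=i$, $k_\ell=j$, all distinct vertices in $\N_+$, independence of the variables $\xi_{\cdot,\cdot}$ across the edges of $\pi$ gives
\begin{equation*}
\prob{\pi \subseteq G_t} = \prod_{r=0}^{\ell-1} \bigl(1-e^{-t x_{k_r} x_{k_{r+1}}}\bigr) \leq \prod_{r=0}^{\ell-1} t \, x_{k_r} x_{k_{r+1}} = t^{\ell} \, x_i \, x_j \, \prod_{r=1}^{\ell-1} x_{k_r}^2 .
\end{equation*}
Summing over all sequences $(k_1, \ldots, k_{\ell-1})$ of distinct elements of $\N_+\setminus\{i,j\}$ (thus relaxing ``distinct'' to ``arbitrary''), the interior factor is at most
\begin{equation*}
\sum_{k_1,\ldots,k_{\ell-1}} x_{k_1}^2 \cdots x_{k_{\ell-1}}^2 \leq \Bigl(\sum_{k \in \N_+} x_k^2\Bigr)^{\ell-1} = (S_2^{G_0})^{\ell-1} .
\end{equation*}

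Applying the union bound over all simple paths from $i$ to $j$ and summing over the path length $\ell \geq 1$ yields
\begin{equation*}
\prob{ i \stackrel{G_t}{\longleftrightarrow} j } \leq \sum_{\ell=1}^{\infty} t^{\ell} \, x_i \, x_j \, (S_2^{G_0})^{\ell-1} = \frac{t \, x_i \, x_j}{1 - t \, S_2^{G_0}} ,
\end{equation*}
where the geometric series converges precisely because $t < 1/S_2^{G_0}$. This is the desired inequality.

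The proof is essentially routine once one writes out the path expansion; there is no serious obstacle. The only subtle point worth double-checking is that one is allowed to rewrite the event $\{i \stackrel{G_t}{\longleftrightarrow} j\}$ as a countable union of path events in spite of $G_t$ having infinitely many vertices, but the connectivity relation $\stackrel{G_t}{\longleftrightarrow}$ is defined via simple paths (Definition \ref{def_mc_graphical}), so this is immediate.
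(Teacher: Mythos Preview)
Your proof is correct and follows essentially the same route as the paper: a union bound over simple paths from $i$ to $j$, the inequality $1-e^{-u}\le u$ on each edge, relaxation of the sum over distinct intermediate vertices to a sum over all of $\N_+^{\ell-1}$, and summation of the resulting geometric series in $t\,S_2^{G_0}$.
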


\begin{proof}
\begin{multline}\label{eq_proof_connect_i_j_in_G_t}
\prob{ i \stackrel{G_t}{\longleftrightarrow} j } \leq \\
\sum_{k=1}^{\infty} 
\mathbf{P}\left(
\begin{array}{c}
\exists\,  i_0,\dots,i_k \in \N_+ \; : \;  i_0=i,\, i_k=j \text{ and } \\
(i_0, i_1, \dots, i_{k-1}, i_k) \text{ is a simple path in } G_t
\end{array}
\right) \leq \\
 \sum_{k=1}^{\infty} \;  \sum_{ (i_1,\dots,i_{k-1}) \in \N_+^{k-1}} \;
\prod_{l=1}^{k} (1-\exp(-x_{i_{l-1}} x_{i_l} t) ) \leq \\
\sum_{k=1}^{\infty} \;  \sum_{ (i_1,\dots,i_{k-1}) \in \N_+^{k-1}} \;
\prod_{l=1}^{k}  x_{i_{l-1}} x_{i_l} t = 
x_i x_j t \cdot  \sum_{k=1}^{\infty} 
\sum_{ (i_1,\dots,i_{k-1}) \in \N_+^{k-1}} \;
\prod_{l=1}^{k-1}   x_{i_l}^2 t  =\\
x_i x_j t \cdot  \sum_{k=1}^{\infty} (t \cdot S_2^{G_0})^{k-1} = 
\frac{ x_i \cdot x_j \cdot t }{1-t \cdot S_2^{G_0}}.
\end{multline}
\end{proof}

\begin{corollary}\label{lemma_small_S_2_does_not_increase_much}
 For any $\underline{x} \in \ltwopos$, $t \geq 0$ and $i,j \in \N_+$, if
\begin{equation}\label{eq_bound_on_initial_S_2}
S_2^{G_0} \leq \frac{1}{2t}
\end{equation}
 holds then we have
\begin{equation}\label{small_S2_if_small_initial_S2}
\expect{S_2^{G_t} } \leq 2 S_2^{G_0} 
\end{equation}
\end{corollary}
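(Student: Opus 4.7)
The plan is to combine the identity \eqref{S2_def} expressing $S_2^{G_t}$ as a sum over connection indicators with the bound \eqref{eq_i_j_conn_in_G_t} on the pairwise connection probabilities. Taking expectation in \eqref{S2_def} gives
\begin{equation*}
\expect{S_2^{G_t}} = S_2^{G_0} + \sum_{i \neq j} x_i x_j \prob{ i \stackrel{G_t}{\longleftrightarrow} j }.
\end{equation*}
Note that assumption \eqref{eq_bound_on_initial_S_2} guarantees $t \cdot S_2^{G_0} \leq 1/2 < 1$, so the hypothesis $t < 1/S_2^{G_0}$ of \eqref{eq_i_j_conn_in_G_t} is satisfied and we may apply the lemma.

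Next, I would substitute the bound \eqref{eq_i_j_conn_in_G_t} to obtain
\begin{equation*}
\expect{S_2^{G_t}} \leq S_2^{G_0} + \frac{t}{1 - t \cdot S_2^{G_0}} \sum_{i \neq j} x_i^2 x_j^2 \leq S_2^{G_0} + \frac{t}{1 - t \cdot S_2^{G_0}} \left( \sum_{i=1}^\infty x_i^2 \right)^2 = S_2^{G_0} \cdot \frac{1}{1 - t \cdot S_2^{G_0}},
\end{equation*}
where in the last step I used $\sum_{i \neq j} x_i^2 x_j^2 \leq \left(\sum_i x_i^2\right)^2 = (S_2^{G_0})^2$ and then simplified algebraically.

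Finally, using \eqref{eq_bound_on_initial_S_2} one more time, $1 - t \cdot S_2^{G_0} \geq 1/2$, so the right-hand side is at most $2 S_2^{G_0}$, which is the desired bound \eqref{small_S2_if_small_initial_S2}. There is no substantive obstacle here: the proof is a direct two-line computation once the previous lemma is in hand. The only minor point worth checking is that the sum $\sum_{i \neq j} x_i x_j \prob{i \stackrel{G_t}{\longleftrightarrow} j}$ may be exchanged with the expectation, which is justified by Tonelli's theorem since all summands are nonnegative.
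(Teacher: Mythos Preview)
Your proof is correct and follows essentially the same approach as the paper: take expectations in \eqref{S2_def}, apply the connection-probability bound \eqref{eq_i_j_conn_in_G_t}, bound $\sum_{i\neq j} x_i^2 x_j^2$ by $(S_2^{G_0})^2$, and finish using \eqref{eq_bound_on_initial_S_2}. The only cosmetic difference is that the paper bounds $\frac{t}{1-tS_2^{G_0}} \leq 2t$ first and then simplifies, whereas you keep the fraction, simplify to $S_2^{G_0}/(1-tS_2^{G_0})$, and bound at the end; both yield the same inequality.
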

\begin{proof} Using \eqref{S2_def}, \eqref{eq_i_j_conn_in_G_t} and \eqref{eq_bound_on_initial_S_2} we obtain
\begin{multline}\label{eq_proof_expect_control_S_2_G_t}
\expect{S_2^{G_t} } 
\leq
S_2^{G_0} +2t \sum_{i\neq j} x_i^2 x_j^2\leq S_2^{G_0} +2t \cdot (S_2^{G_0})^2
\stackrel{\eqref{eq_bound_on_initial_S_2}}{\leq} 2 S_2^{G_0}.
\end{multline}
\end{proof}

The  next lemma is based on \cite[Lemma 23]{aldous_excursions_coalescent}
and \cite[(2.5)]{limic_phd_thesis}. It will be used in Section \ref{section:feller_coupling_proof}
to show that the truncated process is close to the original process if the truncation threshold is chosen big enough.

\begin{lemma}\label{lemma_bipartite}
Let $\underline{x},\underline{y} \in \ltwopos$ and $t \geq 0$. 
Denote the index set of $\underline{x}$ by $I$ and the index set of $\underline{y}$ by $J$.
Denote by 
\[a= \Vert \underline{x} \Vert_2^2<+\infty \qquad \text{and} \qquad
 b = \Vert \underline{y} \Vert_2^2<+\infty.\]
 Consider the bipartite random graph $B_t$ with vertex set $I \cup J$, where
$i \in I$ and $j \in J$ are connected with probability $1-\exp(-t x_i y_j)$.
Then we have
\begin{equation}\label{eq_bipartite_S_2_B_t_statement_finite_I}
|I| < +\infty \quad \implies \quad \expect{ S_2^{B_t}} < +\infty .
\end{equation}
Moreover, if \begin{equation}\label{eq_assumption_bipartite}
t^2 a  b \leq \frac12,
\end{equation}
 holds then we have
  \begin{equation}\label{eq_bipartite_S_2_B_t_statement}
  \expect{ S_2^{B_t}} - a \leq 2 b \cdot
   \left( 1+ t  a \right)^2.
  \end{equation}
   \end{lemma}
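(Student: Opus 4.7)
Set $w_i := x_i$ for $i \in I$ and $w_j := y_j$ for $j \in J$. Expanding $|\mathcal{C}|_w^2 = \sum_{v,v'\in\mathcal{C}} w_v w_{v'}$ and summing over components of $B_t$ gives
\begin{equation*}
\expect{S_2^{B_t}} = a + b + \sum_{v \neq v'} w_v w_{v'}\, \prob{v \stackrel{B_t}{\longleftrightarrow} v'}.
\end{equation*}
To estimate $\prob{v \stackrel{B_t}{\longleftrightarrow} v'}$ I would imitate the path-counting argument of \eqref{eq_proof_connect_i_j_in_G_t}: bound by the expected number of simple paths from $v$ to $v'$, replace each edge probability by $t\cdot w(\cdot)w(\cdot)$ via $1-e^{-u}\leq u$, and sum over intermediate vertices. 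The essential new feature is that $B_t$ is bipartite, so every simple path alternates between $I$ and $J$; upon summing, each intermediate $I$-vertex yields a factor $a$, each intermediate $J$-vertex a factor $b$, and in all three endpoint-parity cases the geometric ratio works out to $t^2 ab$.

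Splitting by the endpoint types and summing the geometric series yields
\begin{equation*}
\prob{i \stackrel{B_t}{\longleftrightarrow} i'} \leq \frac{x_i x_{i'} t^2 b}{1 - t^2 ab},\ \ \prob{j \stackrel{B_t}{\longleftrightarrow} j'} \leq \frac{y_j y_{j'} t^2 a}{1 - t^2 ab},\ \ \prob{i \stackrel{B_t}{\longleftrightarrow} j} \leq \frac{x_i y_j t}{1 - t^2 ab},
\end{equation*}
valid whenever $t^2 ab < 1$. Under \eqref{eq_assumption_bipartite} the denominator is at least $1/2$; substituting and using $\sum_{i,i'} x_i^2 x_{i'}^2 = a^2$, $\sum_{j,j'} y_j^2 y_{j'}^2 = b^2$, $\sum_{i,j} x_i^2 y_j^2 = ab$, one arrives at
\begin{equation*}
\expect{S_2^{B_t}} - a \leq b + 4tab + 2 t^2 a^2 b + 2 t^2 a b^2.
\end{equation*}
Comparing with the target $2b(1 + ta)^2 = 2b + 4tab + 2 t^2 a^2 b$, the gap is exactly $b(1 - 2 t^2 ab)\geq 0$ by \eqref{eq_assumption_bipartite}, which proves \eqref{eq_bipartite_S_2_B_t_statement}.

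For part \eqref{eq_bipartite_S_2_B_t_statement_finite_I} the hypothesis $t^2 ab \leq 1/2$ is absent and the geometric series above need not converge, so I would argue differently. The components of $B_t$ that meet $I$ correspond to a (random) partition of $I$ into super-vertices $K_1,\dots,K_p$ with $p\leq |I|$, and the $B_t$-component containing $K_\ell$ has weight at most $A + Y_\ell$, where $A := \sum_{i\in I} x_i < \infty$ and $Y_\ell := \sum_{j\in N(K_\ell)} y_j \leq \sum_{i\in K_\ell} Y(i)$ with $Y(i) := \sum_{j:\,i\sim j} y_j$; all remaining components are $J$-singletons contributing at most $b$ to $S_2^{B_t}$. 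Using $(A + Y_\ell)^2 \leq 2 A^2 + 2 Y_\ell^2$, $\sum_\ell A(K_\ell)^2 \leq A^2$, and Cauchy--Schwarz $Y_\ell^2 \leq |I|\sum_{i\in K_\ell} Y(i)^2$, the problem reduces to showing $\expect{Y(i)^2}<\infty$ for each $i$; a direct variance computation gives $\expect{Y(i)^2} \leq b + t^2 x_i^2 b^2$, and finiteness of $|I|$ together with $a,b<\infty$ then yields $\expect{S_2^{B_t}}<\infty$.

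The main technical obstacle is the bipartite path-counting bookkeeping: unlike the single-type analogue \eqref{eq_proof_connect_i_j_in_G_t}, three endpoint-parity cases must be handled separately, and one must carefully identify which of $a$ and $b$ sits in the prefactor versus inside the geometric ratio. Once the three connection-probability bounds are in place, the rest is algebraic verification.
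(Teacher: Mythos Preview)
Your argument for \eqref{eq_bipartite_S_2_B_t_statement} is correct and essentially identical to the paper's: the same expansion of $\expect{S_2^{B_t}}$, the same bipartite path-counting bounds on the three connection probabilities, the same use of $t^2ab\le\tfrac12$ to control the geometric ratio, and the same final algebra.

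Where you diverge is in the proof of \eqref{eq_bipartite_S_2_B_t_statement_finite_I}. You abandon the path-counting and instead bound component weights directly via the variables $Y(i)$ and a second-moment computation; this is correct, but heavier than necessary. The paper stays inside the path-counting framework with one extra observation: a \emph{simple} path in the bipartite graph $B_t$ can visit at most $|I|$ vertices of $I$, so the geometric sum in each connection-probability bound is actually a finite sum $\sum_{k=1}^{|I|}(t^2ab)^{k-1}$ rather than an infinite series. When $|I|<\infty$ this sum is finite regardless of whether $t^2ab<1$, and plugging the resulting bounds back into the expansion (together with $\sum x_i^2 x_{i'}^2\le a^2$, $\sum y_j^2 y_{j'}^2\le b^2$, $\sum x_i^2 y_j^2=ab$) immediately gives $\expect{S_2^{B_t}}<\infty$. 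Thus the paper handles both conclusions with a single unified estimate, whereas your route requires a separate argument; on the other hand, your approach for \eqref{eq_bipartite_S_2_B_t_statement_finite_I} is self-contained and does not rely on path enumeration at all.
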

   
\begin{proof} 
First note that, similarly to  \eqref{S2_def}, we have
\begin{multline}\label{eq_bipartite_S_2_expand}
\expect{ S_2^{B_t}}= a + b+
\sum_{i_1 \neq i_2 \in I} x_{i_1} x_{i_2} \prob{ i_1 \stackrel{B_t}{\longleftrightarrow} i_2 }
+ \\
 \sum_{j_1 \neq j_2 \in J} y_{j_1} y_{j_2} \prob{ j_1 \stackrel{B_t}{\longleftrightarrow} j_2 }
+
 2 \sum_{i \in I,\, j \in J} x_{i} y_{j} \prob{ i \stackrel{B_t}{\longleftrightarrow} j }.
\end{multline}
Now note that the number of visits to $I$ of a simple path in $B_t$  is at most $|I|$.
Using this idea and a calculation similar to \eqref{eq_proof_connect_i_j_in_G_t}, we obtain 
 the inequalities
\begin{align*}
\prob{ i_1 \stackrel{B_t}{\longleftrightarrow} i_2 } \;\leq \; &
(x_{i_1}x_{i_2} \cdot b \cdot t^2) \cdot  \sum_{k=1}^{|I|} ( t^2 a b)^{k-1}, \quad
i_1 \neq i_2, \; i_1, i_2 \in I
  \\
\prob{ j_1 \stackrel{B_t}{\longleftrightarrow} j_2 } \; \leq \; &
(y_{j_1}y_{j_2} \cdot a \cdot t^2)  \cdot  \sum_{k=1}^{|I|} ( t^2 a b)^{k-1}, \quad
j_1 \neq j_2, \; j_1, j_2 \in J
  \\
\prob{ i \stackrel{B_t}{\longleftrightarrow} j }\; \leq \; &
(x_{i} y_{j} t)  \cdot  \sum_{k=1}^{|I|} ( t^2 a b)^{k-1}, \quad
i \in I, \; j \in J
\end{align*}
Combining these inequalities with \eqref{eq_bipartite_S_2_expand} 
 we obtain \eqref{eq_bipartite_S_2_B_t_statement_finite_I} as well as
\begin{multline*}
\expect{ S_2^{B_t}} - a \stackrel{\eqref{eq_assumption_bipartite}}{\leq} 
b + 2 \left( a^2 \cdot b \cdot t^2 +
b^2 \cdot a \cdot t^2 +
2 a \cdot b \cdot t
 \right) \stackrel{\eqref{eq_assumption_bipartite}}{\leq} \\
  b \cdot \left( 1 + 2 a^2 t^2 + 1 + 4 a t \right)=
  2 b \left(1 + a t \right)^2.
\end{multline*}
This completes the proof of \eqref{eq_bipartite_S_2_B_t_statement}.
\end{proof}

\begin{lemma}\label{lemma_mc_graphical_rep_cadlag}
With probability $1$, the function $t \mapsto \mathrm{ord}(\um,G_t)$ (see \eqref{eq_def_ord_um_G}) is \cadlag with respect to the 
$\dist(\cdot,\cdot)$-metric (defined in \eqref{eq_def_d_metric}).
\end{lemma}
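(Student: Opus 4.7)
The plan is to reduce the statement to an assertion about the \emph{scalar} function $t\mapsto S_2^{G_t}$ being almost surely c\`adl\`ag, and then establish that using monotonicity of the graphical construction together with Corollary~\ref{lemma_small_S_2_does_not_increase_much}.

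\emph{Reduction to a scalar statement.} The graphical construction of Definition~\ref{def_mc_graphical} has the monotonicity property $G_s\subseteq G_t$ whenever $s\le t$, since $\{\xi_{i,j}\le sx_ix_j\}\subseteq\{\xi_{i,j}\le tx_ix_j\}$. Applying Lemma~\ref{lemma_dominate_graph_compare_aldous} with common vertex set $V=V'=\N_+$ therefore yields
\begin{equation*}
\dist\bigl(\mathrm{ord}(\um,G_s),\mathrm{ord}(\um,G_t)\bigr)\le\sqrt{S_2^{G_t}-S_2^{G_s}},\qquad 0\le s\le t.
\end{equation*}
Since $(\ltwoord,\dist)$ is complete, it suffices to show that $t\mapsto S_2^{G_t}\in\R_+$ is a.s.\ c\`adl\`ag: every right-limit/left-limit of the scalar automatically gives a Cauchy, hence convergent, $\ltwoord$-valued limit via the bound above.

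\emph{Left limits.} By Lemma~\ref{lemma:S_2_G_t_as_finite}, $S_2^{G_s}<\infty$ for every rational $s$ almost surely, and by the monotonicity $G_s\subseteq G_t$ the map $s\mapsto S_2^{G_s}$ is non-decreasing. A non-decreasing function with a.s.\ finite values has left limits at every point, so $S_2^{G_{t-}}$ exists a.s., and by the reduction above so does the left limit of $\mathrm{ord}(\um,G_s)$ in $\ltwoord$.

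\emph{Right continuity.} Fix $t\ge 0$. By memorylessness of the variables $\xi_{i,j}$, conditionally on $\sigma(G_t)$ the residuals $(\xi_{i,j}-tx_ix_j)^+$ of the unused clocks are again i.i.d.\ $\mathrm{EXP}(1)$, and the future evolution $(G_{t+\epsilon})_{\epsilon\ge0}$ corresponds, at the level of component weights, to an independent copy of the graphical construction run from the aggregated initial weights $\mathrm{ord}(\um,G_t)$; in particular $S_2^{G_{t+\epsilon}}$ coincides with the analogous $S_2$ of that restarted construction. Applying the intermediate bound obtained in the proof of Corollary~\ref{lemma_small_S_2_does_not_increase_much} to this restart yields, whenever $\epsilon\le 1/(2S_2^{G_t})$,
\begin{equation*}
\condexpect{S_2^{G_{t+\epsilon}}-S_2^{G_t}}{\sigma(G_t)}\le 2\epsilon\,(S_2^{G_t})^2.
\end{equation*}
Since $S_2^{G_t}<\infty$ a.s., the right-hand side tends to $0$ as $\epsilon\downarrow0$, and combined with the a.s.\ monotonicity $S_2^{G_{t+\epsilon}}\ge S_2^{G_t}$ (which yields an a.s.\ right-limit $L_t\ge S_2^{G_t}$), conditional dominated convergence forces $L_t=S_2^{G_t}$ a.s.

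\emph{Expected main obstacle.} The subtle point is that when $\sum_i x_i=\infty$, infinitely many merger events occur in every nonempty time interval, so one cannot reduce right-continuity to a jump-by-jump analysis. The saving observation is that although the \emph{number} of coagulations is infinite, the \emph{$L^2$-mass} increases at each merger of components of weights $c_k,c_l$ only by $2c_kc_l$, giving an effective rate $\sum_{k<l}2(c_kc_l)^2\le2(S_2^{G_t})^2$; this is exactly the content of Corollary~\ref{lemma_small_S_2_does_not_increase_much}, and it is what makes the scalar $S_2^{G_t}$ right-continuous and thereby the $\ltwoord$-valued path c\`adl\`ag.
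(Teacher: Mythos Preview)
Your reduction to the scalar process $t\mapsto S_2^{G_t}$ via Lemma~\ref{lemma_dominate_graph_compare_aldous} is correct and is exactly what the paper does; the left-limit argument by monotonicity is also fine. The gap is in the right-continuity step. Your restart argument together with the bound from Corollary~\ref{lemma_small_S_2_does_not_increase_much} yields, for each \emph{fixed} $t$, that $S_2^{G_{t+}}=S_2^{G_t}$ almost surely. This is strictly weaker than the pathwise statement that $t\mapsto S_2^{G_t}(\omega)$ is right-continuous for a.e.\ $\omega$: the exceptional null set depends on $t$, and there are uncountably many $t$. A clean counterexample to the implication you are relying on: if $U$ is uniform on $[0,1]$ and $X_t=\ind[U<t]$, then $X$ is nondecreasing and satisfies $X_{t+}=X_t$ a.s.\ for every fixed $t$, yet almost surely the path $t\mapsto X_t$ fails to be right-continuous at the random time $U$. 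Appealing to submartingale regularization would only produce a c\`adl\`ag \emph{modification}, not the c\`adl\`ag property of the specific graphical construction $t\mapsto\mathrm{ord}(\um,G_t)$, which is what the lemma asserts.

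The paper closes this gap by arguing pathwise on a single full-measure event. It fixes $T$, works on
\[
A=\{S_2^{G_T}<\infty\}\cap\{\text{for all }i,j,\text{ only finitely many simple $i$--$j$ paths in }G_T\},
\]
and observes that on $A$ each indicator $t\mapsto\ind[i\stackrel{G_t}{\longleftrightarrow}j]$ is c\`adl\`ag on $[0,T)$ (the finiteness of simple paths rules out the pathology that $i$ and $j$ become connected only in the right limit via ever-longer paths). Dominated convergence applied to the representation \eqref{S2_def}, with the summable majorant $x_ix_j\,\ind[i\stackrel{G_T}{\longleftrightarrow}j]$, then gives right-continuity of $S_2^{G_t}$ at every $t\in[0,T)$ simultaneously on $A$.
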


\begin{proof}
Let us fix some $T \geq 0$. Denote by $A$ the event
\begin{equation}\label{event_A_cadlag_on_this}
A=\{ S_2^{G_T}< + \infty \} \cap 
\left\{
\begin{array}{c}
\text{ for any }  i,j \in \N \text{ the number of  simple }
\\
\text{ paths connecting $i$ and $j$ in $G_T$ is
   finite }
   \end{array}
\right\}
\end{equation}

 By Lemma \ref{lemma:S_2_G_t_as_finite} the event $A$ almost surely holds. 
Assuming that $A$ holds, we will show that $t \mapsto \mathrm{ord}(\um,G_t)$ is \cadlag
on $[0,T)$.

Since $G_s \subseteq G_t$ if $s \leq t$, we can apply Lemma \ref{lemma_dominate_graph_compare_aldous}
 in order to reduce our task to showing that the function $t \mapsto S_2^{G_t}$
is  \cadlag on $[0,T)$.
If $A$ holds, then for any $i,j \in \N$ the function $t \mapsto \ind[ i \stackrel{G_t}{\longleftrightarrow} j ]$  is \cadlag on $[0,T)$.
  Using this fact,  
 \eqref{S2_def} and the dominated convergence theorem, we obtain
that indeed $t \mapsto S_2^{G_t}$ is also \cadlag on $[0,T)$.

\end{proof}


\section{Graphical construction of MCLD($\lambda$)}
\label{subsection:deletions}

Recall the informal definition of the $\mathrm{MCLD}(\lambda)$ process $\bm_t$ from \eqref{mcld_informal_def}.
We now give a graphical construction of the process $\bm_t$  with initial state $\um \in \ltwoord$ and deletion rate $\lambda$.
 Let 
 \begin{equation}\label{exponetials_xi_lambda}
 \begin{array}{l}
\text{ $\left( \xi_{i,j} \right)_{1 \leq i<j<\infty}$ be  random variables with 
$\mathrm{EXP}(1)$ distribution,} \\
\text{ $\left( \lambda_{i} \right)_{1 \leq i <\infty}$
 be random variables with
$\mathrm{EXP}(\lambda)$ distribution,}
\end{array} 
\end{equation}
and let us also assume that all of these random variables are independent.

The heuristic description of our graphical construction is as follows: we increase $t$ continuously and
if the event
 $\xi_{i,j} = t m_i m_j$ occurs for some $1\leq i<j <\infty$, we merge the components of the vertices $i$ and $j$, moreover if $\lambda_i  = t m_i$ for some $i \in \N_+$, then we say that a \emph{lightning} strikes vertex $i$ and
  delete the connected component of vertex $i$.
 Since the total rate of merger and deletion events is infinite if $\sum_i m_i = +\infty$,
  we need to be  careful with the above heuristic definition if we want to make it precise: we will now provide the graphical construction.
 
 \medskip
  
In Definition \ref{def_mc_graphical} we defined the simple graph $G_t$ with vertex set $\N_+$.

We will define for any $t \in \R_+$ 
\begin{equation}
\begin{array}{l}
\text{the set  of intact vertices $\mathcal{V}_t \subseteq \N_+$  and} \\
\text{the set of burnt vertices $\N_+ \setminus \mathcal{V}_t$.}
\end{array}
\end{equation}
The graph $H_t$ will denote the
 subgraph of $G_t$ spanned by $\mathcal{V}_t$ and $\bm_t$ will denote the ordered sequence of component weights of $H_t$.
 
 \medskip
 
Recall that we enumerated the connected components
$\mathcal{C}_k(t), k \in \N_+$
of $G_t$ in \eqref{eq_def_componnets_of_G_t}.
  By the properties of exponential random variables, \eqref{finite_components} and the independence
   of $\left( \xi_{i,j} \right)_{1\leq i<j <\infty}$ and $\left( \lambda_{i} \right)_{i =1}^{\infty}$,
 we see that for every $t \geq 0$
\begin{equation}\label{finitely_many_lightnings_in_each_component}
\prob{ \forall  k \in \N_+: \, \sum_{i \in \mathcal{C}_k(t)} 
\ind[ \lambda_i  \leq t m_i ] < +\infty  }=1.
\end{equation}

This implies that for every $t \geq 0$ and $k \in \N_+$, there exists an almost surely finite $\N$-valued random variable 
$N$ (the number of lightnings that hit the component $\mathcal{C}_k(t)$ by time $t$), indices $i_1, \dots, i_N \subseteq  \mathcal{C}_k(t)$ (the vertices that are hit by lightning) and times $0<t_1<\dots<t_N \leq t$ (the ordered sequence of the times of the lightnings) such that
\[\{ \; i \in \mathcal{C}_k(t) \; : \; \lambda_i  \leq t m_i \;\} = \{ \;i_1, \dots, i_N \; \}
\quad \text{and} \quad
\forall \, 1 \leq l \leq  N \; : \; t_l = \frac{\lambda_{i_l}}{m_{i_l}}.
  \]

We now define the set  of intact vertices $\mathcal{V}_t \subseteq \N_+$ by constructing
$ \mathcal{V}_t \cap \mathcal{C}_k(t)$ for every $k \in \N_+$. 

Let us fix $k \in \N_+$.
We recursively define $\mathcal{V}_{t_l} \cap \mathcal{C}_k(t)$
for each $1 \leq l \leq  N$ in the following way.

\begin{enumerate}[(i)]
\item  At $t_0=0$ we have 
  $\mathcal{V}_{t_0} \cap \mathcal{C}_k(t)= \mathcal{C}_k(t)$.
 \item Assume that we have already constructed $\mathcal{V}_{t_{l-1}} \cap \mathcal{C}_k(t)$
for some $1 \leq l \leq  N$.
 We define $\mathcal{V}_{t_l} \cap \mathcal{C}_k(t)$ by deleting the connected component 
of $i_l$ in the restriction of the graph $G_{t_{l}}$ to the vertex set $\mathcal{V}_{t_{l-1}} \cap \mathcal{C}_k(t)$.
\item
With this recursion we define $\mathcal{V}_{t_N} \cap \mathcal{C}_k(t)$.
Since there are no lightnings hitting $\mathcal{C}_k(t)$ between $t_N$ and $t$, let
$\mathcal{V}_{t} \cap \mathcal{C}_k(t)= \mathcal{V}_{t_N} \cap \mathcal{C}_k(t)$.
\end{enumerate}

Since $\mathcal{C}_k(t), k \in \N_+$
is a partition of $\N_+$, we define 
\begin{equation}\label{def_V_t_H_t_graphical}
\begin{array}{l}
\text{ $\mathcal{V}_t= \bigcup_{k\geq 1} \left(\mathcal{V}_t \cap \mathcal{C}_k(t)\right)$ and}\\
\text{ $H_t$ to be the subgraph of $G_t$ spanned by $\mathcal{V}_t$.}
\end{array}
\end{equation}

Recalling Definition \ref{def_weights_from_graph} we let
 \begin{equation}\label{def_bm_t_graphical}
  \bm_t={\mathrm{ord}}(\um,H_t).
  \end{equation}

\begin{lemma}\label{lemma_indeed_MCLD}
For any  $\um \in \ltwoord$ the  graphical construction  \eqref{def_bm_t_graphical} of the process
 $\bm_t$    gives an $\mathrm{MCLD}(\lambda)$ process with initial condition 
$\um$, i.e.,  an $\ltwoord$-valued Markov process whose dynamics
 satisfy the informal definition given in \eqref{mcld_informal_def}.
\end{lemma}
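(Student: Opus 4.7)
The plan is to verify three claims: that $\bm_t\in\ltwoord$ almost surely for every $t\geq 0$; that the transition rates match items (i) and (ii) of \eqref{mcld_informal_def}; and that the process is Markov. The $\ltwoord$-valuedness is immediate from the construction: since $H_t\subseteq G_t$, each connected component of $H_t$ is contained in a component of $G_t$, so $\|\bm_t\|_2^2\leq S_2^{G_t}$, which is finite almost surely by Lemma \ref{lemma:S_2_G_t_as_finite}.

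For the correctness of the rates, I would simply read them off the graphical recipe. Because $\xi_{i,j}\sim\mathrm{EXP}(1)$, the edge $\{i,j\}$ appears in $G_t$ at rate $m_i m_j$; summing over $i\in\mathcal{C},\,j\in\mathcal{C}'$ shows that two distinct intact components $\mathcal{C},\mathcal{C}'$ merge at rate $\bigl(\sum_{i\in\mathcal{C}} m_i\bigr)\bigl(\sum_{j\in\mathcal{C}'} m_j\bigr)$. Because $\lambda_i\sim\mathrm{EXP}(\lambda)$, the lightning time $\lambda_i/m_i$ is $\mathrm{EXP}(\lambda m_i)$; summing over $i\in\mathcal{C}$ yields deletion rate $\lambda\sum_{i\in\mathcal{C}} m_i$. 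This matches (i) and (ii). In the finite case $\um\in\lzeroord$, only finitely many edges and lightnings are relevant on any bounded time interval, so $\bm_t$ is a continuous-time Markov chain on a finite state space with precisely the jump rates of $\mathrm{MCLD}(\lambda)$.

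For the Markov property in the general case, I would fix $s\geq 0$ and condition on the $\sigma$-algebra $\mathcal{F}_s=\sigma(H_u,\mathcal{V}_u:u\leq s)$. By the memorylessness of the exponential distribution, the shifted clocks $\{\xi_{i,j}-s m_i m_j:\{i,j\}\notin G_s\}$ and $\{\lambda_i-s m_i:i\in\mathcal{V}_s\}$ are again independent exponentials with their original rates and are independent of $\mathcal{F}_s$. Re-running the graphical construction from time $s$ with these shifted clocks on the vertex set $\mathcal{V}_s$ and weights $(m_i)_{i\in\mathcal{V}_s}$ produces a process equal in law to $\mathrm{MCLD}(\lambda)$ started from $\bm_s$; since the law of the trajectory depends on vertex labels only through the ordered sequence of weights, this establishes the Markov property at the level of $\bm_t$.

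The main obstacle is the general case $\um\in\ltwoord\setminus\lzeroord$: the set of jump times of $\bm_t$ is dense in $\R_+$, so one cannot describe the dynamics through a "next event" Markov chain, and items (i)--(ii) must be interpreted at the level of the individual clocks of the graphical construction rather than through a generator acting on cylinder functions. The \cadlag property of $t\mapsto\bm_t$ from Proposition \ref{lemma_mcld_graphical_rep_cadlag} is required to ensure that the construction defines an honest $\ltwoord$-valued process. A cleaner but somewhat circular route would be to approximate $\um$ by truncations $\um^{(N)}=(m_1,\dots,m_N,0,0,\dots)\in\lzeroord$ and pass to the limit; this is essentially the coupling strategy carried out in Section \ref{section:feller_coupling_proof} in the proof of Theorem \ref{thm:feller_basic}.
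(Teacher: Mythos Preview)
Your proposal is correct and follows essentially the same route as the paper: the paper's own proof reduces to the inequality $\Vert\bm_t\Vert_2^2=S_2^{H_t}\leq S_2^{G_t}<\infty$ (exactly your first paragraph) and then asserts that the Markov property and the transition rates follow from the memorylessness and independence of the families $(\xi_{i,j})$ and $(\lambda_i)$, explicitly writing ``We omit further details.'' Your second and third paragraphs are precisely those omitted details, and your final paragraph correctly flags that in the infinite case the rates \eqref{mcld_informal_def} must be read at the level of the clocks rather than via a generator.
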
 
\begin{proof}
 $\bm_t$ is a random element of $\ltwoord$, because we have
\[\Vert \bm_t \Vert_2^2=S_2^{H_t} \leq S_2^{G_t} \stackrel{\eqref{S_2_G_t_as_finite}}{<}+\infty.\]
The fact that $\bm_t$ is a Markov process with the prescribed transition rates follows from the memoryless
property and independence of the random variables $\left( \xi_{i,j} \right)_{1\leq i<j <\infty}$ and $\left( \lambda_{i} \right)_{i =1}^{\infty}$. We omit further details.
\end{proof}

\begin{proof}[Proof of Proposition \ref{lemma_mcld_graphical_rep_cadlag}]
We will show that with probability $1$, the function $t \mapsto \mathrm{ord}(\um,H_t)$ is \cadlag with respect to the 
$\dist(\cdot,\cdot)$-metric, see \eqref{eq_def_d_metric}.

Let us fix some $T \geq 0$. 
We know that the event $A$ defined in \eqref{event_A_cadlag_on_this} almost surely holds.
Denote by $B$ the event that every connected component of $G_T$ is exposed to only finitely many lightning strikes on $[0,T]$.
By \eqref{finitely_many_lightnings_in_each_component}, the event $B$ occurs almost surely.
 Assuming that $A \cap B$ holds, we will show that $t \mapsto \mathrm{ord}(\um,H_t)$ is \cadlag
on $[0,T)$. For any $t \geq 0$, define
\begin{itemize}
\item  $\widehat{H}_{t+\Delta t}$ to be the subgraph of $G_t$ spanned by
 $\mathcal{V}_{t+\Delta t}$,
\item  
$\widecheck{H}_{t+\Delta t}$ to be the subgraph of $G_{t+\Delta t}$ spanned by
 $\mathcal{V}_{t}$.
\end{itemize} 
Recalling \eqref{def_V_t_H_t_graphical} and the inclusions $G_t \subseteq G_{t+\Delta t}$ and 
$\mathcal{V}_{t +\Delta t} \subseteq \mathcal{V}_t$ we see that
 \[\widehat{H}_{t+\Delta t} \subseteq H_{t} \subseteq \widecheck{H}_{t+\Delta t}
 \quad \text{and} \quad
 \widehat{H}_{t+\Delta t} \subseteq H_{t+\Delta t} \subseteq \widecheck{H}_{t+\Delta t},
 \]  so
we can apply Lemma \ref{lemma_dominate_graph_compare_aldous}
 and the triangle inequality in order to reduce our task of proving right-continuity of $t \mapsto \mathrm{ord}(\um,H_t)$ at $t$ to showing that
\[  (\mathrm{a}) \, \lim_{\Delta t \to 0_+ }  S_2^{\widecheck{H}_{t+\Delta t}}-  S_2^{H_t}=0, 
\qquad 
(\mathrm{b}) \lim_{\Delta t \to 0_+ }  S_2^{H_t}- S_2^{\widehat{H}_{t+\Delta t}}=0.
 \]
 Now (a) follows from the fact that the graphical representation of the
  multiplicative coalescent possesses the \cadlag property 
 (see Lemma \ref{lemma_mc_graphical_rep_cadlag}).
 
In order to show (b) we observe that on the event $B$, for every connected component 
$\mathcal{C}$ of $G_T$, we have 
\[\lim_{\Delta t \to 0} \ind[ \, \exists\, i \in \mathcal{C} \; : \;  t m_i < \lambda_i  \leq (t +\Delta t) m_i \, ]=0.\]
Given this observation, we see that for every connected component $\mathcal{C}$ of $H_t$ we have
$\lim_{\Delta t \to 0} \ind[ \, \mathcal{C} \subseteq \mathcal{V}_{t + \Delta t} \, ]=1$. 
Using this fact, $S_2^{H_t}<\infty$ and
the dominated convergence theorem, we obtain (b).
 
 The proof of the existence of left limits is similar and we omit it.
\end{proof}

\section{Feller property of $\mathrm{MCLD}(\lambda)$ }
\label{section:feller_coupling_proof}

\begin{definition} 
The  graphical construction of Section \ref{subsection:deletions} gives a joint realization of all of the $\mathrm{MCLD}(\lambda)$ processes with different initial conditions by using the same collection of random variables
 $\left( \xi_{i,j} \right)_{1\leq i<j <\infty}$ and $\left( \lambda_{i} \right)_{1 \leq i <\infty }$ (see \eqref{exponetials_xi_lambda}).
  We call this coupling the $(\xi, \lambda)$-coupling.
\end{definition}

\begin{theorem}\label{thm:feller}
Let $\um^{(n)}, n \in \N$ be a convergent sequence of elements of  $\ltwoord$ and let $\um^{(\infty)}$
denote their limit, i.e., $\lim_{n \to \infty} \dist(\um^{(n)},\um^{(\infty)})=0 $.
For any $t \in \R_+$ and $n \in \N_+ \cup \{\infty\}$,
 denote by $\bm^{(n)}_t$ the $\mathrm{MCLD}(\lambda)$ process with initial condition 
$\um^{(n)}$ at time $t$. Under the $(\xi, \lambda)$-coupling, we have
\begin{equation}\label{feller_convergence_in_probability}
\dist(\bm^{(n)}_t, \bm^{(\infty)}_t) \stackrel{p}{\longrightarrow} 0, \quad n \to \infty.
\end{equation}
\end{theorem}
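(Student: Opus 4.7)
The plan is a truncation and triangle-inequality argument, with the quantitative control of the tail supplied by Lemma \ref{lemma_bipartite} and Corollary \ref{lemma_small_S_2_does_not_increase_much}. For each $K\in\N$ let $\um^{(n,K)}\in\lzeroord$ denote the truncation $m_i^{(n,K)}:=m_i^{(n)}\ind[i\le K]$ and let $\um^{(n,>K)}:=\um^{(n)}-\um^{(n,K)}$ denote the tail. Let $\bm^{(n,K)}_t$ be the $\mathrm{MCLD}(\lambda)$ process started from $\um^{(n,K)}$ under the same $(\xi,\lambda)$-coupling. The triangle inequality
\begin{multline*}
\dist\bigl(\bm^{(n)}_t,\bm^{(\infty)}_t\bigr)\leq\dist\bigl(\bm^{(n)}_t,\bm^{(n,K)}_t\bigr) \\
+\dist\bigl(\bm^{(n,K)}_t,\bm^{(\infty,K)}_t\bigr)+\dist\bigl(\bm^{(\infty,K)}_t,\bm^{(\infty)}_t\bigr)
\end{multline*}
then reduces the proof to making the outer terms small in probability uniformly in $n$ (by choosing $K$ large), and making the middle term small (by choosing $n$ large for that fixed $K$).

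For fixed $K$ the middle term converges a.s.\ to zero. The process $\bm^{(\cdot,K)}_t$ depends only on the finitely many random variables $\{\xi_{i,j}\}_{1\le i<j\le K}$ and $\{\lambda_i\}_{1\le i\le K}$ and on the input masses $(m_1^{(\cdot)},\dots,m_K^{(\cdot)})$; on the almost-sure event that all the finitely many coagulation and lightning times generated by the limit masses $\um^{(\infty,K)}$ are distinct, the graphical construction is a continuous function of these input masses (with the convention that $m_i^{(\infty)}=0$ sends the associated event times to infinity), so coordinatewise convergence $m_i^{(n)}\to m_i^{(\infty)}$ for $i\le K$ yields $\bm^{(n,K)}_t\to\bm^{(\infty,K)}_t$ almost surely.

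The main step is to produce a uniform-in-$n$ bound on $\E[\dist(\bm^{(n)}_t,\bm^{(n,K)}_t)^2]$ that vanishes as $K\to\infty$. The key structural observation under the $(\xi,\lambda)$-coupling is that any connected component $\mathcal{C}$ of $G_t^{(n)}$ contained entirely in $\{1,\dots,K\}$ is also a connected component of $G_t^{(n,K)}$ with identical induced edges, and its surviving intact subset in $H_t^{(n)}$ equals that in $H_t^{(n,K)}$, because the burning recursion on $\mathcal{C}$ is driven by the same lightnings (only at vertices of $\mathcal{C}\subseteq\{1,\dots,K\}$) and the same internal edges in both processes. Consequently the entire discrepancy between $\bm^{(n)}_t$ and $\bm^{(n,K)}_t$ is confined to the ``tail-touching'' components of $G_t^{(n)}$, i.e.\ those intersecting $\{K+1,K+2,\dots\}$, together with their traces among the $G_t^{(n,K)}$-components. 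The expected squared-$\ell_2$ effect of these pieces is controlled via Lemma \ref{lemma_bipartite}, applied to the bipartite graph of head-tail edges with the parameters $a=\Vert\um^{(n,K)}\Vert_2^2$ and $b=\Vert\um^{(n,>K)}\Vert_2^2$, together with Corollary \ref{lemma_small_S_2_does_not_increase_much} applied to the multiplicative coalescent restricted to the tail (whose smallness hypothesis holds automatically once $K$ is large enough); together these give a bound of the form $C(t,\sup_n\Vert\um^{(n)}\Vert_2^2)\cdot\Vert\um^{(n,>K)}\Vert_2^2$, which tends to zero uniformly in $n$ thanks to $\um^{(n)}\to\um^{(\infty)}$ in $\ltwoord$. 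Markov's inequality then converts this $L^2$-bound into the desired probability bound.

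The main obstacle is precisely this quantitative tail control. Because the $\mathrm{MCLD}(\lambda)$ graphical construction lacks monotonicity (Remark \ref{remark_lack_of_monotonicity}), the $(\xi,\lambda)$-coupled processes $\bm^{(n)}_t$ and $\bm^{(n,K)}_t$ cannot be compared pathwise by the stochastic-domination argument available in the pure $\mathrm{MC}$ setting of \cite[Proposition 5]{aldous_excursions_coalescent}: a single tail vertex can either protect a head component (by routing an incoming strike elsewhere through a newly created connection) or destroy one (by being struck itself and cascading the deletion into the head). The role of Lemmas \ref{lemma_bipartite} and \ref{lemma_small_S_2_does_not_increase_much} is precisely to supply the $L^2$-type estimates that bypass monotonicity, guaranteeing that all head-tail interactions only involve a small mass in expectation and thereby making the truncation a good approximation uniformly in $n$.
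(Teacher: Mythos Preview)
Your overall architecture (truncate, use the triangle inequality, and handle the finite middle term by continuity of the graphical construction) matches the paper's. The gap is in the outer terms. Your ``key structural observation'' is correct but too coarse: it only guarantees agreement on components of $G_t^{(n)}$ lying \emph{entirely} in $\{1,\dots,K\}$, so you are forced to treat \emph{every} tail-touching head-component as a discrepancy. To bound $\dist(\bm_t^{(n)},\bm_t^{(n,K)})$ via a sandwich of the type in Lemma~\ref{lemma_dominate_graph_compare_aldous}, you then need to control $\sum_{k\text{ tail-touching}}(x'_k)^2$ (with $x'_k$ the weight of the $k$th component of $G^{K\downarrow}_t$). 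But conditionally on $G^{K\downarrow}_t,G^{K\uparrow}_t$, the leading-order bound on $\P(k\text{ tail-touching})$ is $t\,x'_k\sum_l y'_l=t\,x'_k\sum_{j>K}m_j^{(n)}$, and for generic $\um\in\ltwoord$ the $\ell_1$ tail $\sum_{j>K}m_j^{(n)}$ is infinite. Lemma~\ref{lemma_bipartite} with $a=\|\um^{(n,K)}\|_2^2$, $b=\|\um^{(n,>K)}\|_2^2$ does not rescue this: it bounds $\E[S_2^{B_t}]-a$, i.e.\ the \emph{increment} in $S_2$ from head--tail edges, not the total squared mass of tail-touching head-components; and it is applied at the wrong scale (individual vertices rather than the $G^{K\downarrow}_t$- and $G^{K\uparrow}_t$-components). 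A second, independent problem is that your claimed unconditional bound $\E[\dist^2]\le C(t,\sup_n\|\um^{(n)}\|_2^2)\cdot\|\um^{(n,>K)}\|_2^2$ cannot hold, because the head-side quantity $\alpha=S_2^{G^{K\downarrow}_t}$ has no moment bound in terms of $\|\um^{(n)}\|_2^2$ alone.

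The paper repairs both issues. First, it works conditionally on $G^{K\downarrow}_t,G^{K\uparrow}_t$ and restricts to the high-probability event $\{\alpha\le M,\ \beta\le\delta\}$ before applying Markov's inequality. Second, and crucially, it refines your structural observation: instead of declaring all tail-touching components bad, it defines $K^*$ (Definition~\ref{def_bad_components_feller}) to be those head-components connected in the bipartite component-graph $\mathcal{B}$ to a \emph{damaged} vertex (one actually hit by a lightning by time $t$). Lemma~\ref{lemma_good_component_properties} shows that intact sets agree on the larger set $K\setminus K^*$, and the damage requirement inserts an extra factor $\lambda t\, y'_l$ (or $\lambda t\, x'_k$) into the path-counting for $\P(k\in K^*)$, converting the divergent $\sum_l y'_l$ into $\sum_l(y'_l)^2=\beta$; this is what makes the bound \eqref{truncation_uniform_expect_bound_2} finite. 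Without this refinement your argument does not close for initial states with infinite total mass.
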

Theorem \ref{thm:feller} implies that the $\mathrm{MCLD}(\lambda)$ Markov process
 indeed possesses the Feller property, i.e., Theorem \ref{thm:feller_basic} holds.
 
\medskip

We want to prove Theorem \ref{thm:feller} using truncation, because \eqref{feller_convergence_in_probability}  trivially
holds for the truncated process. However, we cannot directly apply Lemma \ref{lemma_dominate_graph_compare_aldous}
to compare the original with the truncated process, because
 we cannot upper bound the state of the truncated process at time $t$ by the state of the
  original process at time $t$ (c.f.\ Remark \ref{remark_lack_of_monotonicity}).
  
   In Section \ref{subsection_truncation} we overcome this problem by
  introducing two auxiliary objects that upper/lower bound both the original and the truncated object, but
  yet these auxiliary  objects can be shown to be close to each other if we only throw away a small part of the
  original when we truncate.

In Section \ref{subsection_proof_of_feller_for_MCLD} we prove Theorem \ref{thm:feller} using the results of 
Section \ref{subsection_truncation} and variant of the $\varepsilon/3$-argument.

\subsection{Bounding the effect of truncation}
\label{subsection_truncation}

In this subsection, we will fix $t \geq 0$ as well as an initial state $\um \in \ltwoord$,
  and omit the dependence of random variables on $t$ and $\um$. We also fix a truncation threshold $m \in \N$.

\begin{definition}\label{def_G_truncated} 
Recall  Definition \ref{def_mc_graphical}.
Denote by $G$, $G^{m \downarrow}$ and
$G^{m \uparrow}$ the graphs with adjacency matrix $\ind[ \xi_{i,j} \leq t m_i m_j ]$
on the vertex set $\N_+$, $\{1,\dots,m\}$, and $\{m+1,m+2,\dots \}$, respectively.

Let $\um^{(m)}$ denote the vector $\um$ truncated at index $m$: 
\begin{equation*}
 \um^{(m)}=(m_1, \dots, m_m, 0, 0, \dots ), \;
\text{where} \;
\um=(m_1, m_2, \dots ).
\end{equation*}

Let $\bm$ (resp.\ $\bm^{(m)}$) denote the state  at time $t$ of the realization under the $(\xi, \lambda)$-coupling of the $\mathrm{MCLD}(\lambda)$ process with initial state
$\um$ (resp.\ $\um^{(m)}$).

Denote by $\mathcal{V}$
and $\mathcal{V}^{(m)}$ the corresponding sets of intact vertices, see \eqref{def_V_t_H_t_graphical}.

 Denote by $H$ and $H^{(m)}$ the subgraphs of
$G$ spanned by $\mathcal{V}$ and $\mathcal{V}^{(m)}$.

\end{definition}

  In order to compare $\bm$ with  $\bm^{(m)}$, we 
  need the following result.

\begin{lemma}\label{lemma_auxiliary_graph_inclusions}

If   $\widehat{G}^{(m)}$ and $\widecheck{G}^{(m)}$ are random graphs with vertex sets
\[V(\widehat{G}^{(m)}), V(\widecheck{G}^{(m)}) \subseteq \N_+\]  and
 under the  $(\xi, \lambda)$-coupling
we have
\begin{equation}\label{G_nm_H_nm_inclusions}
\widehat{G}^{(m)} \subseteq H^{(m)} \subseteq \widecheck{G}^{(m)}, \quad
\widehat{G}^{(m)} \subseteq H \subseteq \widecheck{G}^{(m)}
\end{equation}
then almost surely we have 
\begin{equation}\label{sandwich}
\dist( \bm, \bm^{(m)}) \leq  3 \cdot \sqrt{S_2^{\widecheck{G}^{(m)}}- S_2^{\widehat{G}^{(m)}}}.
\end{equation}
\end{lemma}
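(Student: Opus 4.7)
The plan is to apply the triangle inequality with two carefully chosen intermediate objects, splitting $\dist(\bm, \bm^{(m)})$ into three pieces each bounded by $\Delta := \sqrt{S_2^{\widecheck{G}^{(m)}} - S_2^{\widehat{G}^{(m)}}}$. Recalling that $\bm = \mathrm{ord}(\um, H)$ from \eqref{def_bm_t_graphical} and $\bm^{(m)} = \mathrm{ord}(\um^{(m)}, H^{(m)})$, I would introduce
\[ Y_1 := \mathrm{ord}(\um, \widehat{G}^{(m)}), \qquad Y_2 := \mathrm{ord}(\um, H^{(m)}), \]
noting that both intermediate objects are built with the common weight vector $\um$, and write
\[ \dist(\bm, \bm^{(m)}) \leq \dist(\bm, Y_1) + \dist(Y_1, Y_2) + \dist(Y_2, \bm^{(m)}). \]

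For the first two pieces, Lemma \ref{lemma_dominate_graph_compare_aldous} applies directly since each compares $\mathrm{ord}(\um, \cdot)$ of two graphs in a subgraph relation. The sandwich $\widehat{G}^{(m)} \subseteq H \subseteq \widecheck{G}^{(m)}$ combined with the monotonicity of $S_2^{\cdot}$ under subgraph inclusion (itself a consequence of the same lemma) yields
\[ \dist(\bm, Y_1) \leq \sqrt{S_2^H - S_2^{\widehat{G}^{(m)}}} \leq \sqrt{S_2^{\widecheck{G}^{(m)}} - S_2^{\widehat{G}^{(m)}}} = \Delta, \]
and the identical argument applied to $\widehat{G}^{(m)} \subseteq H^{(m)} \subseteq \widecheck{G}^{(m)}$ gives $\dist(Y_1, Y_2) \leq \Delta$.

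The main obstacle is the third piece, $\dist(Y_2, \bm^{(m)}) = \dist(\mathrm{ord}(\um, H^{(m)}), \mathrm{ord}(\um^{(m)}, H^{(m)}))$, which involves a change of weight vector on the common graph $H^{(m)}$ rather than a change of graph. Here I would use the standard fact that the decreasing rearrangement is a contraction in $\ell^2$, so the distance is at most the $\ell^2$-norm of the unordered weight-difference vector, whose coordinate at the component $C_k$ of $H^{(m)}$ is $\delta_k := \sum_{i \in C_k,\, i > m} m_i$. Applying the elementary inequality $a^2 - b^2 \geq (a-b)^2$ (for $a \geq b \geq 0$, with $a = \sum_{i \in C_k} m_i$ and $b = \sum_{i \in C_k,\, i \leq m} m_i$) gives
\[ \sum_k \delta_k^2 \leq S_2^{H^{(m)}, \um} - S_2^{H^{(m)}, \um^{(m)}}, \]
where I write $S_2^{G, \underline{w}}$ for the sum of squared component weights of $G$ with weight vector $\underline{w}$. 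The delicate step is closing the estimate by showing the right-hand side is at most $\Delta^2$. Using $S_2^{H^{(m)}, \um} \leq S_2^{\widecheck{G}^{(m)}, \um}$ by subgraph monotonicity, it would suffice to show $S_2^{H^{(m)}, \um^{(m)}} \geq S_2^{\widehat{G}^{(m)}, \um}$, and here I expect to exploit the structural property built into the construction of the sandwiching graphs in Section \ref{subsection_proof_of_feller_for_MCLD}, namely that $\widehat{G}^{(m)}$ is supported on the vertex set $\{1, \dots, m\}$, so that $S_2^{\widehat{G}^{(m)}, \um^{(m)}} = S_2^{\widehat{G}^{(m)}, \um}$ and the monotonicity $S_2^{H^{(m)}, \um^{(m)}} \geq S_2^{\widehat{G}^{(m)}, \um^{(m)}}$ closes the chain. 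Assembling the three bounds via triangle inequality then yields $\dist(\bm, \bm^{(m)}) \leq 3\Delta$, as required.
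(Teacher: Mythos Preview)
Your argument is correct and follows the same overall strategy as the paper --- a triangle inequality through two intermediate points, each piece controlled by Lemma~\ref{lemma_dominate_graph_compare_aldous} and the sandwich \eqref{G_nm_H_nm_inclusions}. The paper simply picks different waypoints: it goes through $\mathrm{ord}(\um,\widecheck{G}^{(m)})$ and $\mathrm{ord}(\um^{(m)},\widehat{G}^{(m)})$, so that all three pieces are immediate instances of (the general form of) Lemma~\ref{lemma_dominate_graph_compare_aldous}, without any separate ``change of weight vector'' computation.

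One remark on your third piece: you work harder than necessary. Under the paper's conventions (see the sentence just before Definition~\ref{def_bipartite_parallel}), $H^{(m)}$ is a subgraph of $G^{m\downarrow}$, hence $V(H^{(m)})\subseteq\{1,\dots,m\}$. Since $\um$ and $\um^{(m)}$ agree on $\{1,\dots,m\}$, your $Y_2=\mathrm{ord}(\um,H^{(m)})$ is \emph{equal} to $\bm^{(m)}=\mathrm{ord}(\um^{(m)},H^{(m)})$, so $\dist(Y_2,\bm^{(m)})=0$ and your decomposition in fact yields the sharper bound $2\Delta$. In particular, the ``structural property'' $V(\widehat{G}^{(m)})\subseteq\{1,\dots,m\}$ that you flag as an extra ingredient is already forced by the hypothesis $\widehat{G}^{(m)}\subseteq H^{(m)}$; it is not an additional assumption.
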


\begin{proof}
First note that it follows from \eqref{G_nm_H_nm_inclusions} that
\begin{equation}\label{G_nm_H_nm_S_2_ineqs}
S_2^{\widehat{G}^{(m)}} \leq S_2^{H^{(m)}} \leq S_2^{ \widecheck{G}^{(m)} },
\quad S_2^{\widehat{G}^{(m)}} \leq S_2^{H} \leq S_2^{ \widecheck{G}^{(m)} }.
\end{equation}
Thus we have
\begin{multline*}
\dist( \bm, \bm^{(m)})
\stackrel{ \eqref{eq_def_ord_um_G} }{\leq} 
\dist( \bm, {\mathrm{ord}}(\um,\widecheck{G}^{(m)} ) )+ \\
 \dist(  {\mathrm{ord}}(\um,\widecheck{G}^{(m)} ),   {\mathrm{ord}}(\um^{(m)},  \widehat{G}^{(m)})   )+ 
\dist( {\mathrm{ord}}(\um^{(m)},  \widehat{G}^{(m)})  , \bm^{(m)}  ) 
\stackrel{(*)}{\leq} \\
\sqrt{ S_2^{ \widecheck{G}^{(m)} } - S_2^{H}}+
\sqrt{S_2^{ \widecheck{G}^{(m)} } - S_2^{\widehat{G}^{(m)}}}+ 
\sqrt{S_2^{H^{(m)}} - S_2^{\widehat{G}^{(m)}}} 
\stackrel{\eqref{G_nm_H_nm_S_2_ineqs}}{\leq} 
 3 \cdot \sqrt{S_2^{\widecheck{G}^{(m)}}- S_2^{\widehat{G}^{(m)}}},
\end{multline*}
where  $(*)$ follows from \eqref{def_bm_t_graphical},
 the inclusions \eqref{G_nm_H_nm_inclusions} and Lemma \ref{lemma_dominate_graph_compare_aldous}.
\end{proof} 
In Definition \ref{def_sandwich_m_graphs} below we will construct auxiliary graphs $\widehat{G}^{(m)}$ and $\widecheck{G}^{(m)}$
 in such a way that \eqref{G_nm_H_nm_inclusions} holds.
  Recall Definition \ref{def_G_truncated}.
 Note that $H^{(m)}$ is the subgraph of $G^{m \downarrow}$ spanned by the vertex set
 $\mathcal{V}^{(m)}$.
 In particular, every connected component of $H^{(m)}$ is a subset of a connected component of
 $G^{m \downarrow}$. 
 
 The next definition only involves the random variables $\left( \xi_{i,j} \right)_{1\leq i<j <\infty}$  (i.e., we don't have to look at $\left( \lambda_{i} \right)_{i =1}^{\infty}$).
 
 \begin{definition}\label{def_bipartite_parallel}
  Given $G^{m \downarrow}$ and $G^{m \uparrow}$, denote the connected components
 of $G^{m \downarrow}$ by $\mathcal{C}^{m \downarrow}_k, k \in K$ and the
 connected components
 of $G^{m \uparrow}$ by $\mathcal{C}^{m \uparrow}_l, l \in L$. 
 
Let us define an auxiliary bipartite multigraph $\mathcal{B}$ with vertex set $K \cup L$.
Declare $k \in K$ and $l \in L$ connected in $\mathcal{B}$ if $\mathcal{C}^{m \downarrow}_k$ is connected to
$\mathcal{C}^{m \uparrow}_l$ in $G$.  We allow \emph{parallel} edges to be present in $\mathcal{B}$:
if  $\mathcal{C}^{m \downarrow}_k$ is connected to
$\mathcal{C}^{m \uparrow}_l$ by more than one edge in $G$, then we put an equal number of parallel edges between
$k \in K$ and $l \in L$ in $\mathcal{B}$.
\end{definition}
 
 Now we define a subset $K^* \subseteq K$ indexing  ``bad" components of $G^{m \downarrow}$.
  This definition involves the random variables $\left( \xi_{i,j} \right)_{1\leq i<j <\infty}$ as well as $\left( \lambda_{i} \right)_{i =1}^{\infty}$. The components indexed by $k \in K \setminus K^*$ are ``good''. 
  The key property of good components will be stated in Lemma \ref{lemma_good_component_properties} below.

\begin{definition}\label{def_bad_components_feller} 
Recall the definition of $\mathcal{B}$ from 
Definition \ref{def_bipartite_parallel}.

\begin{enumerate}[(i)]
\item An \emph{edge-simple path} in  $\mathcal{B}$ is a path with no repeated edges.
\item We say that  $k \in K$ (resp.\ $l \in L$) is \emph{intact} if no lightning hit any vertex of $\mathcal{C}^{m \downarrow}_k$
(resp.\ $\mathcal{C}^{m \uparrow}_l$) before time $t$. If a vertex of $\mathcal{B}$ is not intact, then we
 say that it is \emph{damaged}.
\item  We say that $k \in K^*$ if $k \in K$ and there is a edge-simple path in $\mathcal{B}$ 
 which consists of at least one edge and
connects $k$ to a damaged vertex of $\mathcal{B}$.
\end{enumerate}
\end{definition}
For an illustration of Definition \ref{def_bad_components_feller}, see Figure \ref{fig:bad_vertices}.

\begin{figure}[h]
\begin{center}
\large
\psfrag{Gd}{$G^{m \downarrow}$}
\psfrag{Gu}{$G^{m \uparrow}$}

  \includegraphics[scale=0.6]{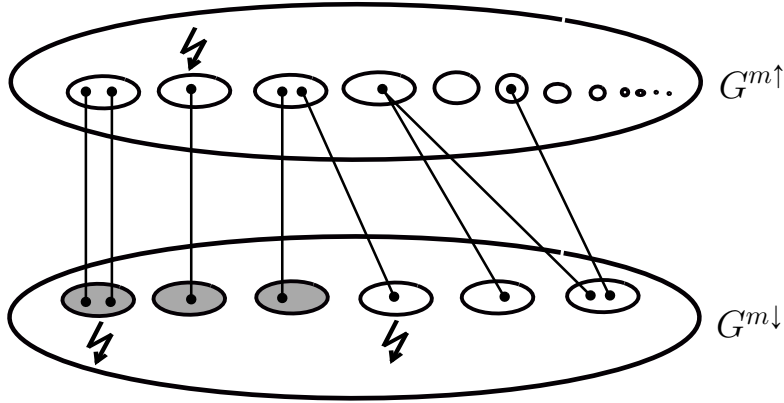}
\caption{An illustration of Definition \ref{def_bad_components_feller}. 
 The blobs marked with a lightning are
 damaged connected components of
$G^{m \downarrow}$ and $G^{m \uparrow}$. 
The grey blobs are the ``bad" components of $G^{m \downarrow}$. The set of indices of ``bad'' components is denoted by $K^*$.
Note that  intact connected components of $G^{m \downarrow}$ can be ``bad'' and  
 damaged connected components of $G^{m \downarrow}$ can be ``good''.
}
\label{fig:bad_vertices}
\end{center}
\end{figure}

\begin{lemma} \label{lemma_good_component_properties}
Recalling Definition \ref{def_G_truncated}, we have
\begin{equation}\label{good_components_identity}
\forall \, k \in K \setminus K^* \, : \; \;
\mathcal{C}^{m \downarrow}_k \cap \mathcal{V}^{(m)} = \mathcal{C}^{m \downarrow}_k \cap \mathcal{V}.
\end{equation}
\end{lemma}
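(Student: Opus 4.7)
The plan is to prove \eqref{good_components_identity} by induction over the lightnings that land in the connected component of $G=G_t$ containing $C := \mathcal{C}^{m \downarrow}_k$; call this component $\widetilde{\mathcal{C}}$. Enumerate the lightning strikes at vertices of $\widetilde{\mathcal{C}}$ as $0 < s_1 < s_2 < \cdots < s_M \leq t$ (there are only finitely many a.s.\ by \eqref{finitely_many_lightnings_in_each_component}) and let $u_i$ denote the struck vertex at time $s_i$. In the truncated process, $\mathcal{V}^{(m)} \cap C$ can only change at those $s_i$ for which $u_i \in C$, because $m^{(m)}_j = 0$ for $j > m$ forbids lightning outside $\{1,\dots,m\}$ in the $\um^{(m)}$-dynamics, and on $\{1,\dots,m\}$ the components of $G^{(m)}$ are precisely the $\mathcal{C}^{m \downarrow}_{k'}$, so the deletion triggered by a lightning in $C$ cannot escape $C$. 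In the full process, by contrast, $\mathcal{V} \cap C$ can in principle be affected by \emph{any} of the $s_i$, since the deletion at $s_i$ is carried out in the (potentially larger) component $\widetilde{\mathcal{C}}$ of $G$.

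The inductive claim I would establish is that $\mathcal{V}^{(m)}_{s_i^+} \cap C = \mathcal{V}_{s_i^+} \cap C$ for every $i = 0, 1, \ldots, M$ (with $s_0 := 0$, for which both sides trivially equal $C$). Fixing $i$ and assuming the equality at $s_{i-1}^+$, the inductive step splits into two cases. Case A ($u_i \in C$): the truncated process excises the component of $u_i$ in $G_{s_i}$ restricted to $\mathcal{V}^{(m)}_{s_{i-1}^+} \cap C$, while the full process excises the component of $u_i$ in $G_{s_i}$ restricted to $\mathcal{V}_{s_{i-1}^+} \cap \widetilde{\mathcal{C}}$. Case B ($u_i \notin C$): the truncated process does nothing on $C$, and the task is to show that the full-process deletion does not touch $C$ either.

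The crux of both cases is the same geometric observation. Suppose a simple path in $G_{s_i}$ using only vertices of $\mathcal{V}_{s_{i-1}^+} \cap \widetilde{\mathcal{C}}$ either joins two vertices of $C$ via an excursion through $\widetilde{\mathcal{C}} \setminus C$ (Case A) or joins $u_i \notin C$ to some $v \in C$ (Case B). Project this path onto $\mathcal{B}$ by collapsing each maximal run within a single $\mathcal{C}^{m \downarrow}_{k'}$ or $\mathcal{C}^{m \uparrow}_{l'}$ to the corresponding vertex of $\mathcal{B}$; the edges traversed by the resulting walk are exactly the cross-edges of $G$ used by the path. Because the original path is simple, every such cross-edge is used at most once, so---crucially using the multigraph structure of $\mathcal{B}$, in which distinct cross-edges between the same pair $\mathcal{C}^{m \downarrow}_{k'},\mathcal{C}^{m \uparrow}_{l'}$ appear as distinct parallel edges---the projected walk is edge-simple. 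In Case A this walk starts and ends at $k$, has length at least two, and $k$ itself is damaged (by the lightning at $u_i \in C$); in Case B the walk has length at least one and ends at the damaged block of $\mathcal{B}$ containing $u_i$. Either way the walk witnesses $k \in K^*$, contradicting $k \in K \setminus K^*$. Combined with the inductive hypothesis this forces the two deletions restricted to $C$ to coincide in Case A and both to be empty in Case B, closing the induction.

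The main obstacle, I anticipate, is the careful bookkeeping around the projection of a simple $G_{s_i}$-path to an edge-simple walk in the multigraph $\mathcal{B}$---in particular, correctly accounting for parallel cross-edges, which is exactly where the edge-simplicity (rather than vertex-simplicity) used in Definition \ref{def_bad_components_feller} enters the picture. Once that observation is in place, \eqref{good_components_identity} is immediate: no lightning in $\widetilde{\mathcal{C}}$ occurs during $(s_M, t]$, so both $\mathcal{V}^{(m)} \cap C$ and $\mathcal{V} \cap C$ agree with their values at $s_M^+$, which are equal by the induction.
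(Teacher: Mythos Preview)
Your argument is correct, and the projection of a simple $G_{s_i}$-path onto an edge-simple walk in the multigraph $\mathcal{B}$ is precisely the right tool; the multigraph structure is indeed what makes edge-simplicity (rather than vertex-simplicity) the relevant notion in Definition~\ref{def_bad_components_feller}.

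The organisation differs from the paper's. Rather than inducting over lightning times, the paper makes a single static case split on whether $k$ itself is intact or damaged at time $t$. If $k$ is intact, then $k\notin K^*$ forces every vertex of the $\mathcal{B}$-component of $k$ to be intact, so no lightning hits the $G$-component $\widetilde{\mathcal{C}}$ at all and \eqref{good_components_identity} is trivial. If $k$ is damaged, then $k\notin K^*$ implies that $k$ carries no parallel edges and lies on no edge-simple cycle of $\mathcal{B}$; hence $k$ is a cut-vertex whose removal splits its $\mathcal{B}$-component into branches $\mathcal{C}'_N$, each attached to $k$ by a single edge $e_N$, and each $\mathcal{C}'_N$ is intact. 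All lightnings in $\widetilde{\mathcal{C}}$ therefore land in $C$, and the resulting fires can only exit $C$ along the unique edges $e_N$ without ever looping back. Your Case~A is exactly the contrapositive of the ``no edge-simple circle through $k$'' observation, and your Case~B corresponds to the paper's claim that every $\mathcal{C}'_N$ is intact. The paper's route is terser and more structural; yours is more explicit about the dynamics and makes the role of the projection map transparent, which arguably better explains \emph{why} the multigraph and edge-simple convention were chosen in Definition~\ref{def_bad_components_feller}.
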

\begin{proof} Let $k \in K \setminus K^*$.
 Denote by $\mathcal{C}'$ the connected component of $k$ in $\mathcal{B}$.
We prove \eqref{good_components_identity} by considering two cases separately.

\smallskip

{\bf First case:} $k$ is intact. 

Denote by $K'= \mathcal{C}' \cap K$ and $L'=\mathcal{C}' \cap L$. 
Then \[ \mathcal{C}=\left(\bigcup_{k' \in K'} \mathcal{C}^{m \downarrow}_{k'}\right) \cup 
\left(\bigcup_{l' \in L'} \mathcal{C}^{m \uparrow}_{l'} \right) \]
is a connected component of $G$  which contains $\mathcal{C}^{m \downarrow}_{k}$ (c.f.\ Definition \ref{def_bipartite_parallel}), moreover our assumption that $k$ is intact together with $k \in K \setminus K^*$ imply that $\mathcal{C}$ is intact (c.f.\ Definition \ref{def_bad_components_feller}), thus we have 
$\mathcal{C}^{m \downarrow}_k \cap \mathcal{V}^{(m)} = \mathcal{C}^{m \downarrow}_k$ and 
$\mathcal{C}^{m \downarrow}_k \cap \mathcal{V}=\mathcal{C}^{m \downarrow}_k$,
  therefore \eqref{good_components_identity} holds.

\smallskip

{\bf Second case:} $k$ is damaged.

$\mathcal{C}' \setminus \{k\}$ is the disjoint union of some connected components $\mathcal{C}'_N, N \in \N$ of 
$\mathcal{B} \setminus \{k\}$.
 Our assumption that $k$ is damaged, Definition \ref{def_bad_components_feller} and the fact that
$k \in K \setminus K^*$ together imply that there are no parallel edges connected to $k$ in $\mathcal{B}$ and no edge-simple circle of the graph $\mathcal{B}$ contains $k$ as a vertex. Therefore  for each $N \in \N$, the cluster $\mathcal{C}'_N$ is connected to $k$ by one single edge $e_N$ of $\mathcal{B}$. Note that $k \in K \setminus K^*$ implies that $\mathcal{C}'_N$ is intact for all $N \in \N$. Therefore, the fires caused by lightnings can only spread ``away'' from $k$ on the edges $e_N, N \in \N$,
so by the graphical construction given in Section \ref{subsection:deletions} and Definition \ref{def_G_truncated}
 we obtain \eqref{good_components_identity}.

\end{proof}

 Now we define auxiliary random graphs  $\widehat{G}^{(m)}$ and $\widecheck{G}^{(m)}$ (c.f.\ Lemma \ref{lemma_auxiliary_graph_inclusions}). 
 
 \begin{definition}\label{def_sandwich_m_graphs}
 Let  $\widecheck{G}^{(m)}$  be the subgraph of $G$ spanned by the vertices
 \begin{equation}\label{eq_def_major_G_m}
  V(\widecheck{G}^{(m)})=
 \left( \bigcup_{ k \in K \setminus K^* } 
 \mathcal{C}^{m \downarrow}_k \cap \mathcal{V}^{(m)}   \right) \cup
 \left( \bigcup_{ k \in K^* } \mathcal{C}^{m \downarrow}_k \right) \cup 
  \{ m+1,m+2,\dots \} .
 \end{equation}
 Define  $\widehat{G}^{(m)}$ to be the subgraph of $G$ spanned by the vertices
 \begin{equation}\label{eq_def_minor_G_m}
  V(\widehat{G}^{(m)})=
   \bigcup_{ k \in K \setminus K^* } 
 \mathcal{C}^{\downarrow m}_k \cap \mathcal{V}^{(m)}.
  \end{equation}
\end{definition}  
  
\begin{lemma} With the above definitions the inclusions \eqref{G_nm_H_nm_inclusions} hold. \end{lemma}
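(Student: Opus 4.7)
The plan is to observe that each of $\widehat{G}^{(m)}$, $H^{(m)}$, $H$, and $\widecheck{G}^{(m)}$ is a subgraph of $G$ spanned by a vertex subset of $\N_+$, so the four inclusions in \eqref{G_nm_H_nm_inclusions} are equivalent to the corresponding inclusions of vertex sets. My strategy is therefore to decompose $\mathcal{V}^{(m)}$ and $\mathcal{V}$ into a piece lying above the truncation threshold and pieces inside the components $\mathcal{C}^{m\downarrow}_k$, and then match these pieces against \eqref{eq_def_minor_G_m} and \eqref{eq_def_major_G_m}. The only nontrivial ingredient is the identity \eqref{good_components_identity} from Lemma \ref{lemma_good_component_properties}.

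First I would record that in the truncated MCLD process the weights $m_i^{(m)}$ vanish for $i>m$, so no such vertex is ever hit by lightning in the $(\xi,\lambda)$-coupling nor incident to a truncated percolation edge; hence every $i>m$ is an intact singleton, and
$$\mathcal{V}^{(m)} \;=\; \{m+1,m+2,\dots\} \;\cup\; \bigcup_{k \in K \setminus K^*} \bigl(\mathcal{C}^{m\downarrow}_k \cap \mathcal{V}^{(m)}\bigr) \;\cup\; \bigcup_{k \in K^*} \bigl(\mathcal{C}^{m\downarrow}_k \cap \mathcal{V}^{(m)}\bigr).$$
For $\mathcal{V}$ the analogous decomposition holds with $\mathcal{V}$ replacing $\mathcal{V}^{(m)}$ inside the unions, and for $k \in K \setminus K^*$ Lemma \ref{lemma_good_component_properties} lets me replace those good-component intersections by their $\mathcal{V}^{(m)}$-counterparts.

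Given these decompositions, the four inclusions follow almost by inspection. For the lower bounds, $V(\widehat{G}^{(m)})\subseteq \mathcal{V}^{(m)}$ is immediate from \eqref{eq_def_minor_G_m}, and rewriting $V(\widehat{G}^{(m)})$ as $\bigcup_{k\in K\setminus K^*}(\mathcal{C}^{m\downarrow}_k \cap \mathcal{V})$ via \eqref{good_components_identity} shows $V(\widehat{G}^{(m)})\subseteq \mathcal{V}$. For the upper bounds, each piece of the decomposition of $\mathcal{V}^{(m)}$ (respectively $\mathcal{V}$) is absorbed by a matching piece of \eqref{eq_def_major_G_m}: the above-threshold part lands in the term $\{m+1,m+2,\dots\}$; the good-component parts land in $\bigcup_{k\in K\setminus K^*}(\mathcal{C}^{m\downarrow}_k\cap \mathcal{V}^{(m)})$ (using \eqref{good_components_identity} in the $\mathcal{V}$-case to translate from $\mathcal{V}$ to $\mathcal{V}^{(m)}$); and the bad-component parts are swallowed by the coarse inclusion $\mathcal{C}^{m\downarrow}_k\cap \mathcal{V}^{(m)},\, \mathcal{C}^{m\downarrow}_k\cap \mathcal{V} \subseteq \mathcal{C}^{m\downarrow}_k$ that the middle term $\bigcup_{k\in K^*}\mathcal{C}^{m\downarrow}_k$ of \eqref{eq_def_major_G_m} supplies.

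I do not anticipate a real obstacle, since the conceptual content has already been absorbed into Lemma \ref{lemma_good_component_properties} and Definition \ref{def_sandwich_m_graphs}. The only mild subtlety is to remember that $\widecheck{G}^{(m)}$ deliberately contains the \emph{entire} bad cluster $\mathcal{C}^{m\downarrow}_k$ rather than just its intersection with the intact vertices; this overgenerosity is precisely what lets it serve as a simultaneous upper bound for both $H^{(m)}$ and $H$ without any appeal to monotonicity of the MCLD dynamics (which would fail, cf.\ Remark \ref{remark_lack_of_monotonicity}).
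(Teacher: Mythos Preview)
Your proposal is correct and follows essentially the same approach as the paper: reduce the graph inclusions to vertex-set inclusions (all four graphs being induced subgraphs of $G$), read off $V(\widehat{G}^{(m)})\subseteq\mathcal{V}^{(m)}\subseteq V(\widecheck{G}^{(m)})$ from the definitions, and invoke Lemma~\ref{lemma_good_component_properties} to handle the good-component pieces of $\mathcal{V}$ for the $H$ sandwich. Your write-up merely makes the underlying decomposition over $K\setminus K^*$, $K^*$, and $\{m+1,m+2,\dots\}$ more explicit than the paper's terse version.
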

\begin{proof}
 The inclusions $V(\widehat{G}^{(m)}) \subseteq \mathcal{V}^{(m)} \subseteq  V(\widecheck{G}^{(m)})$  follow
 from the definitions \eqref{eq_def_major_G_m}, \eqref{eq_def_minor_G_m}.
Thus 
  $\widehat{G}^{(m)} \subseteq H^{(m)} \subseteq \widecheck{G}^{(m)}$ follows from  the fact that $H^{(m)}$
is the subgraph of $G$ spanned by the vertex set $\mathcal{V}^{(m)}$.

 The inclusions $\widehat{G}^{(m)} \subseteq H \subseteq \widecheck{G}^{(m)}$
follow from  Lemma \ref{lemma_good_component_properties} 
and the fact that $H$
is the subgraph of $G$ spanned by the vertex set $\mathcal{V}$.
\end{proof}

 The  next lemma is similar to Lemma \ref{lemma_bipartite}.
\begin{lemma}\label{lemma_minor_major_close}
 Given the above set-up let us condition on the graphs $G^{m \downarrow}$ and $G^{m \uparrow}$ and 
 denote by
\[ \alpha = S_2^{G^{m \downarrow}}, \quad \beta = S_2^{G^{m \uparrow}}. \]
 There exists a constant $C=C(\lambda,t)$ such that if
 \begin{equation}\label{t_2_alpha_beta_leq_half}
  t^2 \alpha \beta \leq \frac12 
  \end{equation}
 holds then we have
\begin{equation}\label{ineq_minor_major_close}
\condexpect{S_2^{\widecheck{G}^{(m)}}- S_2^{\widehat{G}^{(m)}}}{ G^{m \downarrow}, \; G^{m \uparrow} } \leq 
C \cdot \beta \cdot \left( (1 + t \alpha)^2 + (1 + t \alpha) \cdot \alpha^{3/2} \right).
\end{equation}
\end{lemma}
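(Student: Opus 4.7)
The plan is to decompose $S_2^{\widecheck{G}^{(m)}}-S_2^{\widehat{G}^{(m)}}$ by pairs of vertices and to bound the expected contribution of each group by adapting the bipartite path-counting argument of the proof of Lemma~\ref{lemma_bipartite}. Conditional on $G^{m\downarrow}$ and $G^{m\uparrow}$, the clusters $\mathcal{C}^{m\downarrow}_k$ and $\mathcal{C}^{m\uparrow}_l$ are fixed; denote their weights $w_k^\downarrow=\sum_{i\in\mathcal{C}^{m\downarrow}_k}m_i$ and $w_l^\uparrow=\sum_{j\in\mathcal{C}^{m\uparrow}_l}m_j$, so that $\sum_k(w_k^\downarrow)^2=\alpha$ and $\sum_l(w_l^\uparrow)^2=\beta$. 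The bipartite multigraph $\mathcal{B}$ of Definition~\ref{def_bipartite_parallel} is then exactly the bipartite random graph of Lemma~\ref{lemma_bipartite} with parameters $a=\alpha$, $b=\beta$: each $(k,l)$-pair is joined with probability $1-\exp(-tw_k^\downarrow w_l^\uparrow)$, independently across $(k,l)$. Rerunning the proof of Lemma~\ref{lemma_bipartite} under \eqref{t_2_alpha_beta_leq_half} yields the pairwise estimates
\[ \prob{k\stackrel{\mathcal{B}}{\longleftrightarrow}l}\le 2tw_k^\downarrow w_l^\uparrow,\quad \prob{k_1\stackrel{\mathcal{B}}{\longleftrightarrow}k_2}\le 2t^2\beta w_{k_1}^\downarrow w_{k_2}^\downarrow,\quad \prob{l_1\stackrel{\mathcal{B}}{\longleftrightarrow}l_2}\le 2t^2\alpha w_{l_1}^\uparrow w_{l_2}^\uparrow. \]

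Using $\widehat{G}^{(m)}=\widecheck{G}^{(m)}|_{V_2}$ with $V_2:=V(\widehat{G}^{(m)})$ and $V^{\mathrm{ex}}:=V(\widecheck{G}^{(m)})\setminus V_2 = \bigcup_{k\in K^*}\mathcal{C}^{m\downarrow}_k\cup\{m+1,m+2,\dots\}$, I will split
\[ S_2^{\widecheck{G}^{(m)}}-S_2^{\widehat{G}^{(m)}} = (A)+(B)+(C), \]
where (A) consists of pairs with both endpoints in $V^{\mathrm{ex}}$, (B) of pairs with exactly one endpoint in $V^{\mathrm{ex}}$, and (C) of pairs $v,v'\in V_2$ connected in $\widecheck{G}^{(m)}$ but not in $\widehat{G}^{(m)}$. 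Because $\widecheck{G}^{(m)}\subseteq G$, the indicators in (A) and (B) are bounded by the corresponding $G$-connectivity indicators, which depend only on $\mathcal{B}$. The $V^{\mathrm{up}}$-involving part of $(A)+(B)$ is then bounded, via the pairwise estimates above, exactly as in the proof of Lemma~\ref{lemma_bipartite}, contributing a term of order $\beta(1+t\alpha)^2$.

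For the $V^{\mathrm{bad}}$-involving contributions the core input is a bound on $\prob{k\in K^*\mid G^{m\downarrow},G^{m\uparrow}}$ carrying a factor of $\beta$. Since $\{k\in K^*\}$ entails, by Definition~\ref{def_bad_components_feller}, an edge-simple $\mathcal{B}$-path of length $\ge 1$ from $k$ to a damaged vertex, and damages of $k'\in K$ or $l\in L$ have probability at most $\lambda tw_{k'}^\downarrow$ or $\lambda tw_l^\uparrow$ independently of the cross edges, a union bound over the damaged endpoint combined with the $\mathcal{B}$-estimates above yields
\[ \prob{k\in K^*}\le C(\lambda,t)\,\beta\,w_k^\downarrow\,(1+t\alpha). \]
Weighting by $(w_k^\downarrow)^2$, summing over $k$ and using the elementary inequality $\sum_k(w_k^\downarrow)^3\le\sqrt{\alpha}\cdot\alpha=\alpha^{3/2}$ yields the $\beta(1+t\alpha)\alpha^{3/2}$ contribution, and in particular handles the $V^{\mathrm{bad}}$-parts of (A) and (B).

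\emph{The main obstacle is the gluing term} (C), because the connectivity event for $v,v'\in V_2$ ``connected in $\widecheck{G}^{(m)}$ but not in $\widehat{G}^{(m)}$'' is nonlocal in $\mathcal{B}$: the reconnecting path must detour through $V^{\mathrm{ex}}$. I will dominate this event by $\{\exists\, u\in V^{\mathrm{ex}}:\, v\stackrel{G}{\longleftrightarrow}u\stackrel{G}{\longleftrightarrow}v'\}$ and split according to whether $u\in V^{\mathrm{up}}$ or $u\in V^{\mathrm{bad}}$. In the first case the per-$l$ contribution $\le(tm_vw_l^\uparrow)(tm_{v'}w_l^\uparrow)$ summed over $l$ produces the crucial $\beta$-factor $\sum_l(w_l^\uparrow)^2$, yielding an additional $\beta(1+t\alpha)^2$-type term (longer $\mathcal{B}$-paths being controlled by the geometric series in $t^2\alpha\beta\le 1/2$). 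In the second case the estimate for $\prob{k\in K^*}$ above produces an additional $\beta(1+t\alpha)\alpha^{3/2}$-type term. Combining (A), (B) and (C) and absorbing all absolute constants into $C(\lambda,t)$ yields \eqref{ineq_minor_major_close}.
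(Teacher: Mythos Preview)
Your vertex-pair decomposition $(A)+(B)+(C)$ is a genuinely different route from the paper's. The paper instead inserts an intermediate object: the graph $\widetilde{G}^{(m)}$ spanned by $\widetilde{V}^{(m)}=V(\widecheck{G}^{(m)})\cap\{1,\dots,m\}$, and uses the exact identity
\[
S_2^{\widecheck{G}^{(m)}}-S_2^{\widehat{G}^{(m)}}
=\bigl(S_2^{\widecheck{G}^{(m)}}-S_2^{\widetilde{G}^{(m)}}\bigr)+\sum_{k\in K^*}(w_k^\downarrow)^2.
\]
The first term is then a pure bipartite increment (left clusters are the components of $\widetilde{G}^{(m)}$, right clusters are those of $G^{m\uparrow}$), so Lemma~\ref{lemma_bipartite} applies directly with $a\le\alpha$, $b=\beta$; the second term is exactly the quantity you bound via $\prob{k\in K^*}$. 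This two-term split completely bypasses your ``gluing'' term (C), which is where your argument becomes delicate.

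There is a genuine gap in your handling of (C). The dominating event you write, $\{\exists\,u\in V^{\mathrm{ex}}:v\stackrel{G}{\longleftrightarrow}u\stackrel{G}{\longleftrightarrow}v'\}$, is merely ``$v,v'$ lie in the same $G$-component and that component meets $V^{\mathrm{ex}}$''; it does \emph{not} carry a factor of $\beta$. In particular, when $v,v'$ lie in the same good cluster $\mathcal{C}^{m\downarrow}_k$, the event $\{v\stackrel{G}{\longleftrightarrow}\mathcal{C}^{m\uparrow}_l\}$ and $\{v'\stackrel{G}{\longleftrightarrow}\mathcal{C}^{m\uparrow}_l\}$ are the \emph{same} event (both equal $\{k\stackrel{\mathcal{B}}{\longleftrightarrow}l\}$), so the product bound $(tm_vw_l^\uparrow)(tm_{v'}w_l^\uparrow)$ is unjustified. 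What actually makes (C) small is that any $\widecheck{G}^{(m)}$-path from $v$ to $v'$ that is not already in $\widehat{G}^{(m)}$ must \emph{enter and leave} $V^{\mathrm{up}}$, i.e.\ it uses at least two cross-edges; it is this double cross-edge that produces the $\beta=\sum_l(w_l^\uparrow)^2$ factor after path-counting. If you tighten your dominating event to record this (two distinct cross-edges incident to the $v$-side and the $v'$-side, with the up-endpoints connected inside the bipartite structure), the argument can be completed, but as written the step does not go through. The paper's intermediate-graph trick avoids this issue entirely.
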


\subsubsection{Proof of  Lemma \ref{lemma_minor_major_close}}
\label{subsection_proof_lemma_min_maj}

 

   
 

 For any subset $\mathcal{C}$ of $\N$, denote by 
\[w(\mathcal{C})= \sum_{i \in \mathcal{C}} m_i\] 
the weight
of the subset, where $\um=(m_1,m_2,\dots)$.

\begin{definition}
Define a bipartite weighted graph $\widetilde{\mathcal{B}}$ whose "left" vertices correspond to
 the connected components of the restriction of  $G$  to the vertex set
 \begin{equation*} \widetilde{V}^{(m)}:= 
 V(\widecheck{G}^{(m)}) \cap \{1,\dots,m\} \stackrel{ \eqref{eq_def_major_G_m} }{=}
 \left( \bigcup_{ k \in K \setminus K^* } 
 \mathcal{C}^{m \downarrow}_k \cap \mathcal{V}^{(m)}   \right) \cup
 \left( \bigcup_{ k \in K^* } \mathcal{C}^{m \downarrow}_k \right) ,
\end{equation*} 
  and the "right"
 vertices correspond to the components of $G^{m \uparrow}$. Define the weights of the vertices of 
 $\widetilde{\mathcal{B}}$ to be the $w(\cdot)$-weight of the corresponding connected components.
  We declare two vertices in $\widetilde{\mathcal{B}}$ 
 to be connected
 if the corresponding subsets  are connected in $\widecheck{G}^{(m)}$.
 Denote by $\widetilde{G}^{(m)}$ the subgraph of $G$ spanned by $\widetilde{V}^{(m)}$.
\end{definition} 
 
 With the above notation we have
 \[ S_2^{\widecheck{G}^{(m)}} \stackrel{ \eqref{eq_def_major_G_m} }{=} S_2^{\widetilde{\mathcal{B}}}, \qquad 
  S_2^{\widetilde{G}^{(m)}}\stackrel{ \eqref{eq_def_minor_G_m} }{=}
 S_2^{\widehat{G}^{(m)}} + \sum_{k \in K^*} w(\mathcal{C}^{m \downarrow}_k)^2.
 \]
Thus we can start to rewrite the left-hand side of \eqref{ineq_minor_major_close}:
\begin{multline*}
 \condexpect{S_2^{\widecheck{G}^{(m)}}- S_2^{\widehat{G}^{(m)}}}{ G^{m \downarrow}, \; G^{m \uparrow} } =\\
 \condexpect{ S_2^{\widetilde{\mathcal{B}}} - S_2^{\widetilde{G}^{(m)}}}{ G^{m \downarrow}, \; G^{m \uparrow} }+
 \condexpect{ \sum_{k \in K^*} w(\mathcal{C}^{m \downarrow}_k)^2 }{ G^{m \downarrow}, \; G^{m \uparrow} }.
  \end{multline*}
In order to show \eqref{ineq_minor_major_close}, it is enough to prove that \eqref{t_2_alpha_beta_leq_half}
implies
\begin{align}
\label{truncation_uniform_expect_bound_1}
\condexpect{ S_2^{\widetilde{\mathcal{B}}} - S_2^{\widetilde{G}^{(m)}}}{ G^{m \downarrow}, \; G^{m \uparrow} } &\leq
2 \beta \cdot  (1 + t \alpha)^2,  \\
\label{truncation_uniform_expect_bound_2}
\condexpect{ \sum_{k \in K^*} w(\mathcal{C}^{m \downarrow}_k)^2 }{ G^{m \downarrow}, \; G^{m \uparrow} } &\leq
2 t^2 \lambda \beta \cdot (1 + t \alpha) \cdot \alpha^{3/2}.
\end{align}
First we deduce \eqref{truncation_uniform_expect_bound_1} from Lemma \ref{lemma_bipartite}, with the
underlying bipartite graph being  $\widetilde{\mathcal{B}}$. Note that
the condition \eqref{eq_assumption_bipartite} holds, because 
$a=S_2^{\widetilde{G}^{(m)}} \leq  S_2^{G^{m \downarrow}}= \alpha$ and $b= S_2^{G^{m \uparrow}}=\beta $.
 Thus we have
\begin{multline*}
\condexpect{ S_2^{\widetilde{\mathcal{B}}} - S_2^{\widetilde{G}^{(m)}}}
{ G^{m \downarrow}, \; G^{m \uparrow}, \; (\lambda_i)_{i=1}^m } 
\stackrel{\eqref{eq_bipartite_S_2_B_t_statement}}{\leq}\\
2 S_2^{G^{m \uparrow}} \cdot  (1 + t S_2^{\widetilde{G}^{(m)}} )^2 \leq
2 \beta \cdot  (1 + t \alpha)^2.
\end{multline*}
Now \eqref{truncation_uniform_expect_bound_1} follows by averaging over the values of $(\lambda_i)_{i=1}^m$.

\smallskip

In order to prove \eqref{truncation_uniform_expect_bound_2}, we first 
give an upper bound on the probability of the event  $\{ k \in K^*\}$.

For $k \in K$, denote by $x'_k = w(\mathcal{C}^{m \downarrow}_k)$
and for $l \in L$, denote $y'_l=  w(\mathcal{C}^{m \uparrow}_{l})$. Note that we have
\[  \alpha= \sum_{k \in K} (x'_k)^2, \qquad \beta= \sum_{l \in L} (y'_l)^2. \]
Recall the definition of $K^*$ from Definition \ref{def_bad_components_feller}.
The next calculation is similar to \eqref{eq_proof_connect_i_j_in_G_t}, so we omit the 
first few steps.
\begin{multline*}
\condprob{ k \in K^* }{ G^{m \downarrow}, \; G^{m \uparrow} } \leq \\
\sum_{ l_1 \in L} \left( x'_k y'_{l_1} t \right)
\left( \lambda y'_{l_1} t \right) + 
\sum_{ l_1 \in L} \sum_{ k_1 \in K} \left( x'_k y'_{l_1} t \right)
\left( y'_{l_1} x'_{k_1} t \right)
\left( \lambda x'_{k_1} t \right) + \\
\sum_{ l_1 \in L} \sum_{ k_1 \in K} \sum_{l_2 \in L}
 \left( x'_k  y'_{l_1} t \right)
\left( y'_{l_1} x'_{k_1} t \right) 
\left( x'_{k_1} y'_{l_2} t \right)
 \left( \lambda y'_{l_2} t \right) + \dots  = \\
 x'_k t^2 \lambda \beta + x'_k t^3 \lambda \alpha \beta + 
  x'_k t^4 \lambda \alpha \beta^2 + \dots = \\
  x'_k t^2 \lambda \beta \cdot \left( 1 + t \alpha \right) \cdot
 \sum_{n=0}^{\infty} \left( t^2 \alpha \beta \right)^n 
 \stackrel{\eqref{t_2_alpha_beta_leq_half}}{\leq} 2  x'_k t^2 \lambda \beta \cdot \left( 1 + t \alpha \right).
 \end{multline*}
Now we are ready to prove \eqref{truncation_uniform_expect_bound_2}:
\begin{multline*}
\condexpect{ \sum_{k \in K^*} (x'_k)^2 }{ G^{m \downarrow}, \; G^{m \uparrow} }\leq
\sum_{k \in K} 2  (x'_k)^3 t^2 \lambda \beta \cdot \left( 1 + t \alpha \right) 
\stackrel{(*)}{\leq}\\
2 t^2 \lambda \beta \cdot (1 + t \alpha) \cdot \alpha^{3/2},
\end{multline*}
where in $(*)$ we used the fact that $x'_k \leq \sqrt{\alpha}$ for any $k \in K$.
This completes the proof of \eqref{ineq_minor_major_close} and Lemma \ref{lemma_minor_major_close}.

\subsection{Proof of Theorem \ref{thm:feller}}
\label{subsection_proof_of_feller_for_MCLD}

Let us fix $t, \lambda \in \R_+$, the sequence $\um^{(n)}, n \in \N$ and the limit $\um^{(\infty)}$.
 For any $n \in \N_+ \cup \{ \infty \}$,
let $\bm^{(n,m)}_t$ denote the realization under the $(\xi, \lambda)$-coupling of the
 $\mathrm{MCLD}(\lambda)$ with initial state
\begin{equation}\label{truncation_def}
 \um^{(n,m)}=(m^{(n)}_1, \dots, m^{(n)}_m, 0, 0, \dots ), \;
\text{where} \;
\um^{(n)}=(m^{(n)}_1, m^{(n)}_2, \dots ).
\end{equation}
We also define $\mathcal{V}^{(n,m)}_t$ to be the set of intact vertices of the graph
 $H^{(n,m)}_t$ of the $\mathrm{MCLD}(\lambda)$ with initial state $\um^{(n,m)}$ under the $(\xi, \lambda)$-coupling.

In order to prove \eqref{feller_convergence_in_probability} we only need to show that
for every $\varepsilon > 0$ there exists $m, n_0 \in \N$ such that for all $n \geq n_0$ we have
\begin{align}
\label{X^n_and_X^nm_close}
\prob{ \dist( \bm^{(n)}_t, \bm^{(n,m)}_t) \geq \varepsilon } &\leq 4\varepsilon, \\
\label{X^nm_and_X^inftym_close}
\prob{ \dist( \bm^{(n,m)}_t, \bm^{(\infty ,m)}_t) \geq \varepsilon } &\leq \varepsilon, \\
\label{X^inftym_and_X^infty_close}
\prob{ \dist( \bm^{(\infty,m)}_t, \bm^{(\infty)}_t) \geq \varepsilon } &\leq 4\varepsilon.
\end{align}

 Let us fix  $\varepsilon>0$.
  We know from Lemma \ref{lemma:S_2_G_t_as_finite} that
 \begin{equation*}
  \prob{S_2^{G^{(\infty)}_t}<+\infty}=1,
  \end{equation*}
 where $G^{(\infty)}_t$ denotes the random graph constructed from the exponential variables
 $\left( \xi_{i,j} \right)_{1\leq i<j <\infty}$ and the initial state $\um^{(\infty)} \in \ltwoord$ according to the rules
 described in Definition \ref{def_mc_graphical}. Given $\varepsilon>0$, we can find $M \in \R_+$ such that
  \begin{equation} \label{eq_def_M}
   \prob{S_2^{G^{(\infty)}_t}\geq M-1} \leq \varepsilon.
  \end{equation}
Recall the notion of the constant $C=C(t, \lambda)$ from  Lemma \ref{lemma_minor_major_close}.
Let us choose $\delta>0$  such that
 \begin{equation}\label{choice_of_delta}
t^2 M \delta \leq \frac12 \quad \text{and} \quad
 9 C \cdot \delta \cdot \left( (1 + t M)^2 + (1 + t M) \cdot M^{3/2} \right) \leq \varepsilon^3.
 \end{equation}
  Now we choose the truncation threshold $m$. Since $\um^{(n)} \to \um^{(\infty)}$ in $l_2$, we can make
 \[\sup_{n \in \N \cup \{ \infty \} } \Vert \um^{(n)}- \um^{(n,m)} \Vert_2 \]
 (where $\um^{(n,m)}$ is defined in \eqref{truncation_def})
  as small as we wish by making $m$ large. Thus by \eqref{small_S2_if_small_initial_S2}
and the Markov inequality we can choose $m$ such that
\begin{equation}\label{remaining_small_S2}
 \sup_{n \in \N \cup \{ \infty \} }
\prob{S_2^{G_t^{(n,m) \uparrow}} \geq \delta} \leq \varepsilon.
\end{equation}  
 Having fixed $m$, we note that under the $(\xi, \lambda)$-coupling
 we have
 \begin{equation*}
\dist(\bm^{(n,m)}_t, \bm^{(\infty,m)}_t) \stackrel{p}{\longrightarrow} 0, \quad n \to \infty.
\end{equation*}
 We also have
\begin{equation}\label{limsup_truncated_S2}
 S_2^{G^{(n,m) \downarrow }_t} \stackrel{p}{\longrightarrow} S_2^{G^{(\infty,m) \downarrow }_t} \leq S_2^{G^{(\infty)}_t},
\end{equation}
 thus we can choose $n_0$ such that for all $n \geq n_0$ we have \eqref{X^nm_and_X^inftym_close} and
 \begin{equation}\label{uniform_truncated_S2}
 \forall \; n \in \{n_0, n_0+1,\dots \} \cup \{ \infty \} \; : \; 
 \prob{S_2^{G^{(n,m)\downarrow }_t}\geq M} \stackrel{\eqref{eq_def_M}, \eqref{limsup_truncated_S2}}{\leq } 2\varepsilon.
 \end{equation}
We are ready to show  \eqref{X^n_and_X^nm_close} and \eqref{X^inftym_and_X^infty_close} for the above choice of $m$ and $n_0$.

For any 
$n \in  \{n_0, n_0+1, \dots \} \cup \{ \infty \}$ we have
\begin{multline*}
\prob{ \dist( \bm^{(n)}_t, \bm^{(n,m)}_t) \geq \varepsilon} \leq
\prob{S_2^{G_t^{(n,m) \uparrow}} \geq \delta}+
\prob{S_2^{G^{(n,m) \downarrow}_t}\geq M} + \\
\prob{ \dist( \bm^{(n)}_t, \bm^{(n,m)}_t) \geq \varepsilon, \; 
 S_2^{G_t^{(n,m) \uparrow}} \leq \delta, \, S_2^{G^{(n,m)\downarrow }_t}\leq M }
  \stackrel{\eqref{remaining_small_S2}, \eqref{uniform_truncated_S2}}{\leq}\\
\varepsilon + 2 \varepsilon+
\prob{ \dist( \bm^{(n)}_t, \bm^{(n,m)}_t) \geq \varepsilon, \; A },
 \end{multline*}
where $A=\{ S_2^{G_t^{(n,m) \uparrow}} \leq \delta, \, S_2^{G^{(n,m)\downarrow }_t}\leq M \}$. We bound
\begin{multline*}
\prob{ \dist( \bm^{(n)}_t, \bm^{(n,m)}_t) \geq \varepsilon, \; A } \stackrel{\eqref{sandwich}}{\leq}
\prob{  9 \cdot \left(S_2^{\widecheck{G}^{(n,m)}}- S_2^{\widehat{G}^{(n,m)}}\right) \geq \varepsilon^2, \; A }=
\\
\expect{ 
\condprob{  9 \cdot \left(S_2^{\widecheck{G}^{(n,m)}}- S_2^{\widehat{G}^{(n,m)}}\right) \geq \varepsilon^2 }
 { G^{(n,m) \downarrow}, \; G^{(n,m) \uparrow} }\; ; \; A 
 } 
 \\
 \stackrel{(*)}{\leq} 
 \frac{9 C \cdot \delta \cdot \left( (1 + t M)^2 + (1 + t M) \cdot M^{3/2} \right)}{\varepsilon^2}
 \stackrel{\eqref{choice_of_delta}}{\leq}  \varepsilon,
\end{multline*} 
where in the equation marked by $(*)$ we used
 Lemma \ref{lemma_minor_major_close} and the Markov inequality.
This concludes the proof of \eqref{X^n_and_X^nm_close},\eqref{X^nm_and_X^inftym_close},\eqref{X^inftym_and_X^infty_close}
and Theorem \ref{thm:feller}.
 
\bigskip

 {\bf Acknowledgements:} I thank James Martin for collaborating with me on \cite{rigid_paper}, which
 inspired this work. I also thank an anonymous referee for useful comments on the manuscript.
  This work is partially supported by OTKA (Hungarian
National Research Fund) grant K100473, the Postdoctoral Fellowship of
the Hungarian Academy of Sciences and the Bolyai Research Scholarship
of the Hungarian Academy of Sciences.

\end{document}